\newtheorem{theorem}{Theorem}
\newtheorem{lemma}[theorem]{Lemma}
\numberwithin{equation}{section}
\renewcommand{\leq}{\leqslant}
\renewcommand{\geq}{\geqslant}
\newcommand{\R}{\mathbb{R}}
\begin{document}

\title[Polygons of unit area and sets of infinite measure]{Polygons of unit area with vertices in sets of infinite planar measure}

\author[V. Kova\v{c}]{Vjekoslav Kova\v{c}}
\author[B. Predojevi\'{c}]{Bruno Predojevi\'{c}}

\address{Department of Mathematics, Faculty of Science, University of Zagreb, Bijeni\v{c}ka cesta 30, 10000 Zagreb, Croatia}

\email{vjekovac@math.hr}
\email{bruno.predojevic@math.hr}

\subjclass[2020]{28A75} 



\begin{abstract}
Paul Erd\H{o}s and R. Daniel Mauldin asked a series of questions on certain types of polygons of area $1$, the vertices of which can be found in every planar set of infinite Lebesgue measure. We address two of these questions, one on cyclic quadrilaterals and the other on convex polygons with congruent sides, with respectively positive and negative answers.
\end{abstract}

\maketitle


\section{Introduction}
A long time ago Paul Erd\H{o}s observed that every measurable subset of the plane of infinite two-dimensional Lebesgue measure necessarily contains vertices of a triangle of area precisely equal to $1$. He published this observation as an exercise in the Hungarian high-school journal \emph{Matematikai Lapok} and also commented on it in \cite[p.~122]{Erd78} and at several later occasions. A very short proof of this claim, based on the classical theorem of Steinhaus \cite{Ste20}, can be found in the book \cite[p.~182]{CFG91} (under Problem G13), but it can also be read between the lines of the present paper.

In the proceedings \cite{Erd83:open} of the conference \emph{Measure Theory}, held at Oberwolfach in 1983, Erd\H{o}s reported that he and R. Daniel Mauldin attempted numerous possible generalizations of the above claim. He began a list of questions by replacing triangles with quadrilaterals of special type:
\begin{quote}
\emph{Let $S$ have infinite planar measure, consider all sets of $4$ points, $x_1,x_2,x_3,x_4$, so that the area of the convex hull is $1$. Can one find $4$ such points in $S$ if we insist that they have some regularity conditions?} \cite[p.~323--324]{Erd83:open}
\end{quote}
The more specific remarks and questions followed. On the one hand, Erd\H{o}s mentioned that the answer is negative if we require that the four points, $x_1,x_2,x_3,x_4$, span a parallelogram of area $1$. He provided no proof, but one of the present authors found a very simple counterexample in \cite[\S 6]{Kov23color}. On the other hand, Erd\H{o}s claimed that the property is easily seen to hold for trapezoids of area $1$. This really is the case, as the proof can be a minor variant of the well-known solution of the above problem on triangles. However, the case of isosceles trapezoids of area $1$ was left open and it seems to be unresolved to date.

The following subsequent question attracted our attention:
\begin{quote}
\emph{(\ldots) can we assume that $(x_1,x_2,x_3,x_4)$ is inscribed in a circle?} \cite[p.~324]{Erd83:open}
\end{quote}
In this note we use only basic ideas of geometric measure theory and multivariate calculus to give a positive answer to this problem.

\begin{theorem}\label{thm:cyclic}
Every measurable planar set $\mathcal{S}$ of infinite Lebesgue measure contains the four vertices of a cyclic quadrilateral of area $1$.
\end{theorem}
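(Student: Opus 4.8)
The plan is to produce the quadrilateral by an averaging (integral-geometric) argument rather than by an explicit construction. First I would isolate the two consequences of $\mathcal{L}^2(\mathcal{S})=\infty$ that do the work. One is that $\mathcal{S}$ is unbounded; more usefully, after a rotation one may assume that the set of rich horizontal levels $Y:=\{y\in\R:\mathcal{H}^1(\mathcal{S}_y)>0\}$ (where $\mathcal{S}_y=\{x:(x,y)\in\mathcal{S}\}$) is unbounded, since otherwise $\mathcal{S}$ would lie, up to a null set, in the intersection of two perpendicular slabs and hence be bounded. The other is that for each $y\in Y$ the one-dimensional Steinhaus theorem provides $\rho_y>0$ with $\mathcal{S}_y-\mathcal{S}_y\supseteq(-\rho_y,\rho_y)$, so $\mathcal{S}_y$ contains pairs of points at every prescribed sufficiently small distance. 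These are exactly the ingredients behind the classical fact for triangles, and the idea is to imitate that fact while using the extra flexibility of cyclic quadrilaterals to cope with the concyclicity constraint.

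Next I would fix a parametrization. A circle of centre $c\in\R^2$ and radius $r>0$ together with angles $\phi_1<\phi_2<\phi_3<\phi_4<\phi_1+2\pi$ yields a convex cyclic quadrilateral with vertices $V_k=c+r(\cos\phi_k,\sin\phi_k)$, and a short computation with the shoelace formula (the cross terms telescope) shows its area equals $\tfrac{r^2}{2}\sum_k\sin(\phi_{k+1}-\phi_k)$, depending only on $r$ and the angular gaps. Requiring area $1$ forces $r=r(\phi):=\bigl(2/\sum_k\sin(\phi_{k+1}-\phi_k)\bigr)^{1/2}$, leaving a six-parameter family indexed by $(c,\phi_1,\phi_2,\phi_3,\phi_4)$. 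The theorem is then equivalent to the inequality
\[
I:=\int\!\!\int\ \prod_{k=1}^{4}\mathbf{1}_{\mathcal{S}}\bigl(V_k(c,\phi)\bigr)\ dc\,d\phi\ >\ 0
\]
over the open set of admissible parameters, because positivity of $I$ forces the integrand to be positive somewhere, and that point gives an honest convex cyclic quadrilateral of area exactly $1$ with all four vertices in $\mathcal{S}$. (Joint measurability of the integrand is routine: $V_k$ is real-analytic in $(c,\phi)$ and, $c$ being a translation variable, the preimage of a planar null set is a null set; degenerate parameter values can be discarded.)

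The heart of the matter---and the only place the measure hypothesis is genuinely needed---is $I>0$. I would localize to the flat regime $r(\phi)\to\infty$, where $\sum_k\sin(\phi_{k+1}-\phi_k)$ is small, the four angles crowd into a short arc, and an area-$1$ cyclic quadrilateral degenerates to a long, thin, slightly bowed strip---morally the same flat shape that powers the triangle argument. Integrating out the centre $c$ by Fubini turns $I$ into an integral over shapes of $\mathcal{L}^2\bigl(\mathcal{S}\cap(\mathcal{S}-u_2)\cap(\mathcal{S}-u_3)\cap(\mathcal{S}-u_4)\bigr)$, where $u_j=r(\phi)\bigl(w_j(\phi)-w_1(\phi)\bigr)$ with $w_j(\phi)=(\cos\phi_j,\sin\phi_j)$; equivalently, a change of variables from $(c,\phi)$ to the four vertices $(V_1,V_2,V_3,V_4)$---a multivariate-calculus computation of a Jacobian non-vanishing on the flat regime---reduces $I>0$ to showing that the six-dimensional manifold $\mathcal{M}_1\subseteq\R^8$ of area-$1$ cyclic quadrilaterals meets $\mathcal{S}^4$ in positive $\mathcal{H}^6$-measure. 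That I would attack by thickening: seat the vertices with the help of the rich levels in the spirit of the triangle argument---two nearby vertices placed by Steinhaus, the circumscribing circle and the bowing parameters left free, the fourth vertex allowed to roam---and verify that, because $Y$ is unbounded, this roaming sweeps a set of positive $\R^8$-measure among quadruples in $\mathcal{S}^4$ lying within $\delta$ of $\mathcal{M}_1$, with the lower bound surviving as $\delta\to0$.

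The obstacle is exactly this positivity step. For triangles the configuration closes in one stroke, because once two nearby points sit on a rich level the third vertex is only required to lie \emph{somewhere} on a distant rich level. For cyclic quadrilaterals the fourth vertex is essentially pinned once the other three---hence the circumcircle---and the area are fixed, so no such one-shot choice exists and one must genuinely integrate over the whole six-parameter family, matching its degrees of freedom against the mere positivity, not fullness, of $\mathcal{S}$. Making this average come out strictly positive, uniformly enough to pass to the limit in the thickening, is where the real work lies; by comparison the parametrization, the Jacobian computation, and the measurability bookkeeping are routine.
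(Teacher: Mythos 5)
Your plan is not yet a proof: everything hinges on the positivity $I>0$ (equivalently, on the six-parameter family of unit-area cyclic quadrilaterals meeting $\mathcal{S}^4$ in a set of positive measure), and this is exactly the step you leave open. The heuristics you offer for it do not close the gap. Unboundedness of the set of rich levels $Y$ and the Steinhaus property of each slice give you, as in the triangle problem, two vertices at a prescribed small distance and a third vertex far away; but, as you yourself observe, the fourth vertex is then pinned by concyclicity and the area constraint, so ``letting it roam'' produces nothing, and merely knowing that $\mathcal{S}$ has infinite measure gives no lower bound on how often the pinned fourth point falls back into $\mathcal{S}$ (the set can be extremely sparse, so a global average over the whole family has no reason to be bounded below). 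The thickening-and-limit argument you sketch would need a quantitative statement of the form ``near some point, $\mathcal{S}$ occupies so large a fraction of a small region that the map (first three vertices) $\mapsto$ (fourth vertex) cannot avoid it,'' and nothing in the proposal supplies that.

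The missing ingredient is precisely the local mechanism the paper uses: after fixing $A,B$ on a rich horizontal line by Steinhaus and choosing the third vertex $C$ to be a Lebesgue density point of $\mathcal{S}$ at distance $2/|AB|$ from that line (with the extra care that the angles of $\triangle ABC$ at $A$ and $B$ lie strictly between $30^\circ$ and $150^\circ$), one studies the locally defined map $f\colon D\mapsto E$ determined by requiring $ABED$ to be cyclic of area $1$. The implicit and inverse function theorems make $f$ a $\textup{C}^1$-diffeomorphism near $C$ with $f(C)=C$, and the angle bounds give uniform control $\|\textup{d}f\|_{\textup{op}}<20$ and $\det\textup{d}f>1/5$ near $C$. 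Then the density of $\mathcal{S}$ at $C$ forces the two sets $\mathcal{S}\cap\mathcal{D}^{-}_{\delta}(C)$ and $f(\mathcal{S}\cap\mathcal{D}^{-}_{\delta/20}(C))$ to have total measure exceeding that of the half-disk, hence to intersect, producing $D,E\in\mathcal{S}$ with $E=f(D)$. Your reduction to an integral $I$ and a Jacobian change of variables is compatible with this, but without a density-point argument plus quantitative control of the vertex-closing map (or some substitute), the positivity you need cannot be extracted from infinite measure alone, so the proposal as it stands has a genuine gap at its central step.
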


It is understood that the quadrilateral needs to be non-degenerate, i.e., all $4$ of its vertices are different. Erd\H{o}s commented further:
\begin{quote}
\emph{In fact, can we find $4$ points on a circle so that the quadrilateral determined by the $4$ points should have arbitrarily large area?} \cite[p.~324]{Erd83:open}
\end{quote}
This is clearly also true, as our result immediately implies an even stronger property of every infinite measure subset $\mathcal{S}$ of the Euclidean plane: for every $\mathfrak{a}>0$ it contains vertices of a cyclic quadrilateral of area $\mathfrak{a}$. Namely, in order to find four concyclic points in $\mathcal{S}$ spanning a quadrilateral of area $\mathfrak{a}$, we simply apply Theorem \ref{thm:cyclic} to the scaled set $\mathfrak{a}^{-1/2}\mathcal{S}$.

Then Erd\H{o}s proceeded with discussing a couple of variants of the initial triangle problem, namely for isosceles or right-angled triangles of area $1$, which both seem to have remained open and we do not discuss them here. The second main result of this note is motivated by the final question in the same series, i.e., the same relevant paragraph in \cite{Erd83:open}:
\begin{quote}
\emph{Is it true that, for $n$ large enough, we have a convex polygon $(x_1,x_2,\ldots,x_n)$ of area $1$, [such that] $x_i\in S$ and all sides $(x_i,x_{i+1})$ are equal?} \cite[p.~324]{Erd83:open}
\end{quote}
Here it is not clear whether the number of vertices/sides $n$ is meant to be a fixed large integer, or it could depend on the set $S$. However, we can disprove both variants of the question by the following even stronger result.
\begin{theorem}\label{thm:congruent}
There exists a planar set $\mathcal{S}$ of infinite Lebesgue measure such that every convex polygon with congruent sides and all vertices in $\mathcal{S}$ has area strictly less than $1$.
\end{theorem}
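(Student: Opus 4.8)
The idea is to build $\mathcal{S}$ as a disjoint union of thin annular strips centered at the origin, placed at rapidly increasing radii, so that the set has infinite measure but is so ``sparse'' radially that any convex equilateral polygon with all vertices in $\mathcal{S}$ must be small. Concretely, I would fix a sequence of radii $r_k\to\infty$ growing very fast (say $r_{k+1}\geq 10 r_k$, to be tuned later) and a sequence of widths $\delta_k\to 0$, and set
\begin{equation}
\mathcal{S}=\bigcup_{k\geq 1}\bigl\{x\in\R^2:\ r_k\leq |x|\leq r_k+\delta_k\bigr\}.
\end{equation}
The measure of the $k$-th strip is $\pi\bigl((r_k+\delta_k)^2-r_k^2\bigr)=\pi\delta_k(2r_k+\delta_k)$, so by choosing, e.g., $\delta_k=1/r_k$ each strip has measure roughly $2\pi$, and $\mathcal{S}$ has infinite Lebesgue measure regardless of how fast $r_k$ grows. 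This gives us complete freedom to make the gaps between consecutive strips as large as we wish.

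Next I would analyze which convex equilateral polygons can have all their vertices in such an $\mathcal{S}$. The key geometric observation is that a convex polygon $P=(x_1,\dots,x_n)$ with all sides of common length $s$ and large area must have large diameter, hence at least one vertex far from the origin, hence a vertex in a strip with large index $k$; but then \emph{consecutive} vertices differ in distance from the origin by at most $s$ (triangle inequality: $\bigl||x_{i+1}|-|x_i|\bigr|\leq|x_{i+1}-x_i|=s$), so if $s$ is much smaller than the gap $r_{k}-(r_{k-1}+\delta_{k-1})$, then walking around the polygon we can never leave the union of the one or two strips that a given vertex can reach. More carefully: since the strips are separated by gaps tending to infinity while $s$ is fixed along the polygon, all vertices must in fact lie in a single strip (or in two adjacent strips if $s$ happens to be comparable to that particular gap, a case I would rule out by the growth condition, or handle by a slightly more careful radius choice). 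Once all $n$ vertices lie in one strip $r_k\leq|x|\leq r_k+\delta_k$, the polygon is inscribed in a thin annulus, and I would bound its area by that of the outer disk minus... no — better, by a direct estimate: a convex polygon contained in the annulus $\{r_k\le|x|\le r_k+\delta_k\}$ has area at most the area of that annulus, namely $\pi\delta_k(2r_k+\delta_k)$. Hmm, that is about $2\pi$, not $<1$; so the annulus bound alone is too weak, and the convex-equilateral constraint must be used.

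The right estimate exploits that a convex \emph{equilateral} polygon inscribed in a thin annulus is nearly a regular polygon inscribed in the circle of radius $\approx r_k$, and a convex region inside an annulus of inner radius $r_k$ and width $\delta_k$ has area at most (perimeter of outer circle) $\times\,\delta_k\,=\,2\pi(r_k+\delta_k)\delta_k$... again $\approx 2\pi$. So in fact I would instead choose the widths $\delta_k$ to go to zero fast enough that $2\pi(r_k+\delta_k)\delta_k<1$, i.e.\ $\delta_k\leq c/r_k$ with a small absolute constant $c$; then each strip still has measure $\pi\delta_k(2r_k+\delta_k)\approx 2\pi c<\infty$ but summing over $k$ still diverges, so $\mathcal{S}$ has infinite measure, while any convex polygon contained in a single strip has area $<1$. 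Combining the two steps: either the polygon's vertices all lie in one strip, giving area $<1$ directly; or they are spread over several strips, which the gap/growth condition forbids once the side length $s$ (fixed for that polygon) is smaller than the relevant gaps — and a polygon whose vertices span several strips would need a side longer than a gap, but then by convexity and equilaterality \emph{every} side has that length, forcing the polygon to be so large that it ``skips'' strips, contradicting that every vertex lies in $\mathcal{S}$. The main obstacle is making this last dichotomy airtight: I must choose the growth of $r_k$ and decay of $\delta_k$ so that for \emph{every} possible common side length $s>0$, the configuration of strips reachable by an $s$-equilateral convex polygon is confined to a region on which the area is forced below $1$; a clean way to do this is to note that a convex equilateral $n$-gon with side $s$ has diameter at most $\tfrac{n s}{2}$ but also, being convex, its vertices in cyclic order have monotone-then-monotone radial coordinate, so the set of indices $k$ with a vertex in strip $k$ is an \emph{interval} of integers, and if that interval has length $\geq 2$ then some consecutive pair of vertices crosses a full gap, forcing $s\geq r_{k}-(r_{k-1}+\delta_{k-1})\to\infty$; feeding this back, the total radial extent of the polygon is at most $n s\leq n\cdot(\text{a few consecutive gaps})$, and one checks that a convex region of area $\geq 1$ cannot fit inside the union of two thin annuli separated by a gap unless that gap is small — a contradiction with the rapid growth. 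I would carry out this bookkeeping with $r_k=2^{2^k}$ and $\delta_k=r_k^{-2}$, which makes all the needed inequalities transparent.
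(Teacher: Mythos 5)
Your construction fails at its central step, and the failure is not a matter of tuning $r_k$ and $\delta_k$: you conflate ``all vertices lie in a thin annulus'' with ``the polygon is contained in that annulus.'' A convex polygon is the convex hull of its vertices, and the convex hull of points spread around a circle fills in the disk, not the annulus. Concretely, for any $k$ and any $n\geq 3$, the regular $n$-gon inscribed in the circle $|x|=r_k$ has all its vertices on the inner boundary of your $k$-th strip (hence in $\mathcal{S}$, since you take the strips closed; an inscribed circle of radius slightly above $r_k$ works if they were open), it is convex with all sides congruent, and its area is $\tfrac{1}{2}n r_k^2\sin(2\pi/n)\to\pi r_k^2$, which is enormous. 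So your set $\mathcal{S}$ contains the vertices of convex equilateral polygons of arbitrarily large area, and the claimed bound ``area at most $2\pi(r_k+\delta_k)\delta_k$'' is simply not a bound on the polygon but on the strip. No rotationally symmetric union of full annuli can work, because full circles always carry inscribed regular polygons; the later dichotomy about gaps between strips never gets a chance to matter, since these counterexamples live in a single strip with side length $s\approx 2r_k\sin(\pi/n)$ chosen freely.

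What is genuinely needed, and what the paper's construction supplies, is a set whose shape controls the convex hull of \emph{any} finite subset of it, not merely the measure of the set near each point. The paper takes $\mathcal{S}=\{(x,y): x>1,\ y>0,\ 4xy<1\}$, the infinite-area region under a branch of a hyperbola: by convexity of the complementary region above the hyperbola, any convex polygon with vertices in $\mathcal{S}$ lies below a tangent or secant line of the hyperbola, and such a line cuts off a triangle of area $\tfrac12+\tfrac{(x_1-x_2)^2}{8x_1x_2}$, i.e.\ only $\tfrac12$ plus a correction controlled by the length of the one side that can cross the hyperbola; the equilateral hypothesis is then used to rule out the remaining long-and-thin configurations. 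If you want to salvage a ``sparse pieces at infinity'' idea, the pieces must themselves be arranged so that convex hulls of points from them are forced to be thin (e.g.\ lie under a common concave graph), which is exactly the role the hyperbola plays.
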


In particular, Theorem \ref{thm:congruent} gives an infinite measure set $\mathcal{S}$ that does not contain the vertices of any convex polygon with equal side-lengths and area $1$.
Moreover, the number $1$ is not anyhow special here, but one has to be aware that all areas that are sufficiently small depending on the set $\mathcal{S}$ can certainly be obtained, simply by the Lebesgue density theorem. 

Polygons of area $1$ have also attracted some attention in the context of the so-called Euclidean Ramsey theory, where vertices of such polygons need to be found in a single color-class of an arbitrary finite coloring of the plane \cite{Gra80,DJ10,AR12,AC14}, or in a single measurable set of positive upper density \cite{Kov23color}.
However, sets of merely infinite measure, which are studied here, can be very sparse and the techniques of harmonic analysis employed in \cite{Kov23color} are no longer effective. For this reason, we approach Theorem \ref{thm:cyclic} using ``soft'' (i.e., qualitative) measure-theoretic techniques, which give very little control on the shape of the desired cyclic quadrilateral. Also, the set $\mathcal{S}$ constructed in the proof of Theorem \ref{thm:congruent} is simple enough that purely geometric observations are sufficient.


\subsection{Progress update}
After a preprint of this paper became publicly available, Junnosuke Koizumi \cite{Koi25}, ``inspired by the proof of Theorem \ref{thm:cyclic},'' showed that every subset of $\mathbb{R}^2$ of infinite measure also contains the vertices of an area $1$ isosceles triangle, right triangle, and isosceles trapezoid.
This, together with Theorems \ref{thm:cyclic} and \ref{thm:congruent}, completely resolved all five Erd\H{o}s'es questions on unit-area finite configurations in planar sets of infinite measure posed in \cite{Erd83:open}. These questions are also jointly formulated as Problem \#353 on Thomas Bloom's website \emph{Erd\H{o}s Problems} \cite{EP}.


\subsection{Notation}
Following customs of the elementary Euclidean geometry, we will write $\overline{AB}$ for the \emph{line segment} with endpoints $A$ and $B$, and $|AB|$ for the Euclidean \emph{distance} between these two points.
Next, $\mathcal{D}_r(C)$ will stand for the Euclidean open disk centered at $C$ with radius $r>0$, i.e., for the set of all points $P$ in the plane that satisfy $|CP|<r$.
When two points of $\R^2$ are given by their coordinates $(x_1,y_1)$ and $(x_2,y_2)$, their distance is rather written as $\mathop{\textup{dist}}((x_1,y_1),(x_2,y_2))$.

The two-dimensional Lebesgue measure of a measurable set $\mathcal{S}$ will be written simply as $\mathop{\textup{area}}(\mathcal{S})$, since it is a generalization of the elementary notion of \emph{area} and there is no need to make a distinction between them. 
Likewise, the one-dimensional Lebesgue measure of a measurable subset $\mathcal{I}$ of a line will be written as $\mathop{\textup{length}}(\mathcal{I})$.
The \emph{diameter} of $\mathcal{S}$ is $\sup_{A,B\in\mathcal{S}}|AB|$.
Given a set $\mathcal{I}\subseteq\R$, we write $\mathcal{I}-\mathcal{I}$ for its \emph{difference set}, defined as $\{a-b:a,b\in\mathcal{I}\}$.

The \emph{differential} of a function $f$ at a point $C$ will be written as $\textup{d}f(C)$. After we fix a coordinate system, we will start identifying linear operators from $\R^n$ to $\R^m$ with their $m\times n$ matrices in the standard bases of $\R^m$ and $\R^n$.
The \emph{operator norm} of a linear map $M\colon\R^n\to\R^m$ is defined as $\max_{|v|\leq1}|Mv|$, where $|\cdot|$ stands for the Euclidean norm (i.e., the usual vector length) on the respective spaces. 


\section{Proof of Theorem \ref{thm:cyclic}}
Suppose that $\mathcal{S}$ is a measurable subset of the plane such that $\mathop{\textup{area}}(\mathcal{S})=\infty$.
The first step of the proof is to locate a triangle $\triangle ABC$ of area $1$ with vertices in $\mathcal{S}$ in a way that we also have control over its angles and over the density of $\mathcal{S}$ at $C$. The argument is a modification of the standard one mentioned in the introduction \cite[p.~182]{CFG91}.

Consider a pair of orthogonal lines $p$ and $q$ in the plane. They subdivide the plane (up to negligible sets) into $4$ closed right-angled sectors (i.e., quadrants) $\mathcal{Q}_{1},\mathcal{Q}_{2},\mathcal{Q}_{3},\mathcal{Q}_{4}$, as in Figure \ref{fig:cyclic0}. By pigeonholing, there exists some $1\leq i\leq 4$ such that $\mathop{\textup{area}}(\mathcal{S}\cap\mathcal{Q}_{i}) =\infty$. 
Choosing the coordinate system so that $\mathcal{Q}_i$ becomes the ``upper'' sector, we can write
\[ \mathcal{Q}_i = \{ (x,y)\in\R^2 \,:\, y\geq |x| \} \]
and then the lines $p$ and $q$ respectively have Cartesian equations $y=-x$ and $y=x$; see Figure \ref{fig:cyclic1}.
For every $t\geq0$ denote the points $L_{t}=(-t,t)$ and $R_{t} = (t,t)$, so that the family of line segments $\overline{L_{t}R_{t}}$ indexed by $t\geq0$ partitions $\mathcal{Q}_{i}$. By Fubini's theorem,
\[ \mathop{\textup{area}}(\mathcal{S}\cap\mathcal{Q}_{i})
=\int_{0}^{\infty} \mathop{\textup{length}}\big(\mathcal{S}\cap\overline{L_{t}R_{t}}\big) \,\textup{d}t. \]
Therefore, there has to exist some $t>0$ such that the one-dimensional Lebesgue measure of $\mathcal{S}\cap\overline{L_{t}R_{t}}$ is strictly positive. From now on, we fix one particular such $t$ and denote $L=L_t$ and $R=R_t$. 
Define 
\[ \mathcal{I} := \big\{ x\in\R \,:\, (x,t) \in \mathcal{S}\cap\overline{LR} \big\} \subseteq [-t,t]. \] 
By the Steinhaus theorem \cite{Ste20}, the fact $\mathop{\textup{length}}(\mathcal{I})>0$ implies that there exists some $\theta>0$ such that the difference set $\mathcal{I}-\mathcal{I}$ contains the interval $(-\theta,\theta)$.
Thus, for any $0<c<\theta$, we can find two points in $\mathcal{S}\cap\overline{LR}$ at distance precisely $c$ apart from each other.

\begin{figure}
\includegraphics[width=0.3\linewidth]{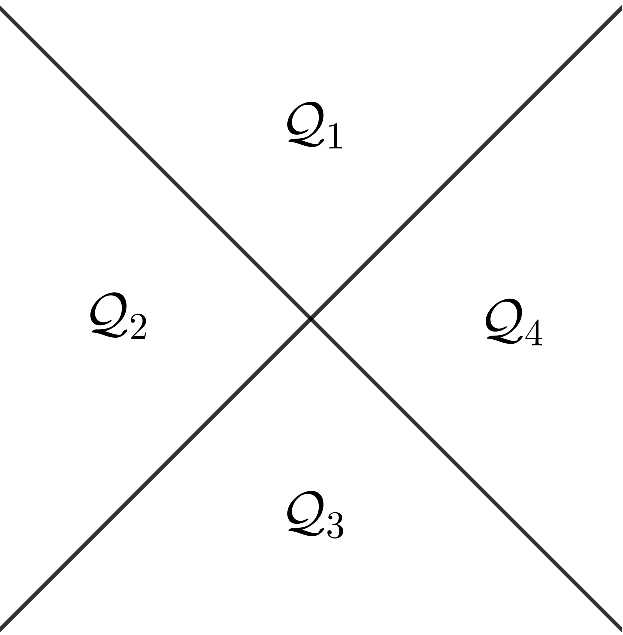}
\caption{}
\label{fig:cyclic0}
\end{figure}

Let $L'$ on $p$ and $R'$ on $q$ be the points such that $\angle L'RL = 30^\circ$ and $\angle RLR' = 30^\circ$. 
Next, let $L''$ on $p$ and $R''$ on $q$ be such that $L''R''$ is parallel to $L'R'$ and lies above (or coincides with) line $L'R'$, and that the distance between the lines $LR$ and $L''R''$ is at least $2/\theta$.
Finally, let $\mathcal{R}$ be the unbounded open region the boundary of which consists of $p$, $q$, and $L''R''$; see Figure \ref{fig:cyclic1} again.
Note that we still have $\mathop{\textup{area}}(\mathcal{S}\cap\mathcal{R})=\infty$, but from now on we only need $\mathop{\textup{area}}(\mathcal{S}\cap\mathcal{R})>0$.
Almost every point of $\mathcal{S}\cap\mathcal{R}$ is its Lebesgue density point (see e.g.\@ \cite[p.~12]{SW71} or \cite[Theorem 3.21]{Fol99}), so there exists a point $C$ in that set such that
\[ \lim_{ \varepsilon \to 0^+ } \frac{\mathop{\textup{area}}(\mathcal{S} \cap \mathcal{D}_{\varepsilon}(C))}{\mathop{\textup{area}}(\mathcal{D}_{\varepsilon}(C))} = 1. \]
In particular, there exists $\varepsilon_1>0$ such that
\begin{equation}\label{eq:LebesguePoint}
\frac{\mathop{\textup{area}}(\mathcal{S} \cap \mathcal{D}_{\varepsilon}(C))}{\mathop{\textup{area}}(\mathcal{D}_{\varepsilon}(C))} > \frac{9999}{10000} \quad\text{for every } \varepsilon\in(0,\varepsilon_1]. 
\end{equation}
Now let $c\in(0,\theta)$ be such that the distance from the point $C$ to the line $LR$ is precisely $2/c$. From the conclusion of the previous paragraph we can find points $A,B\in \mathcal{S}\cap\overline{LR}$ such that $|AB|=c$. The area of $\triangle ABC$ is then clearly $1$, while its angles satisfy
\[ \angle CBA \geq \angle CRL > \angle L'RL = 30^\circ \]
and
\[ \angle CAB \geq \angle CLR > \angle R'LR = 30^\circ. \]

\begin{figure}
\includegraphics[width=0.73\linewidth]{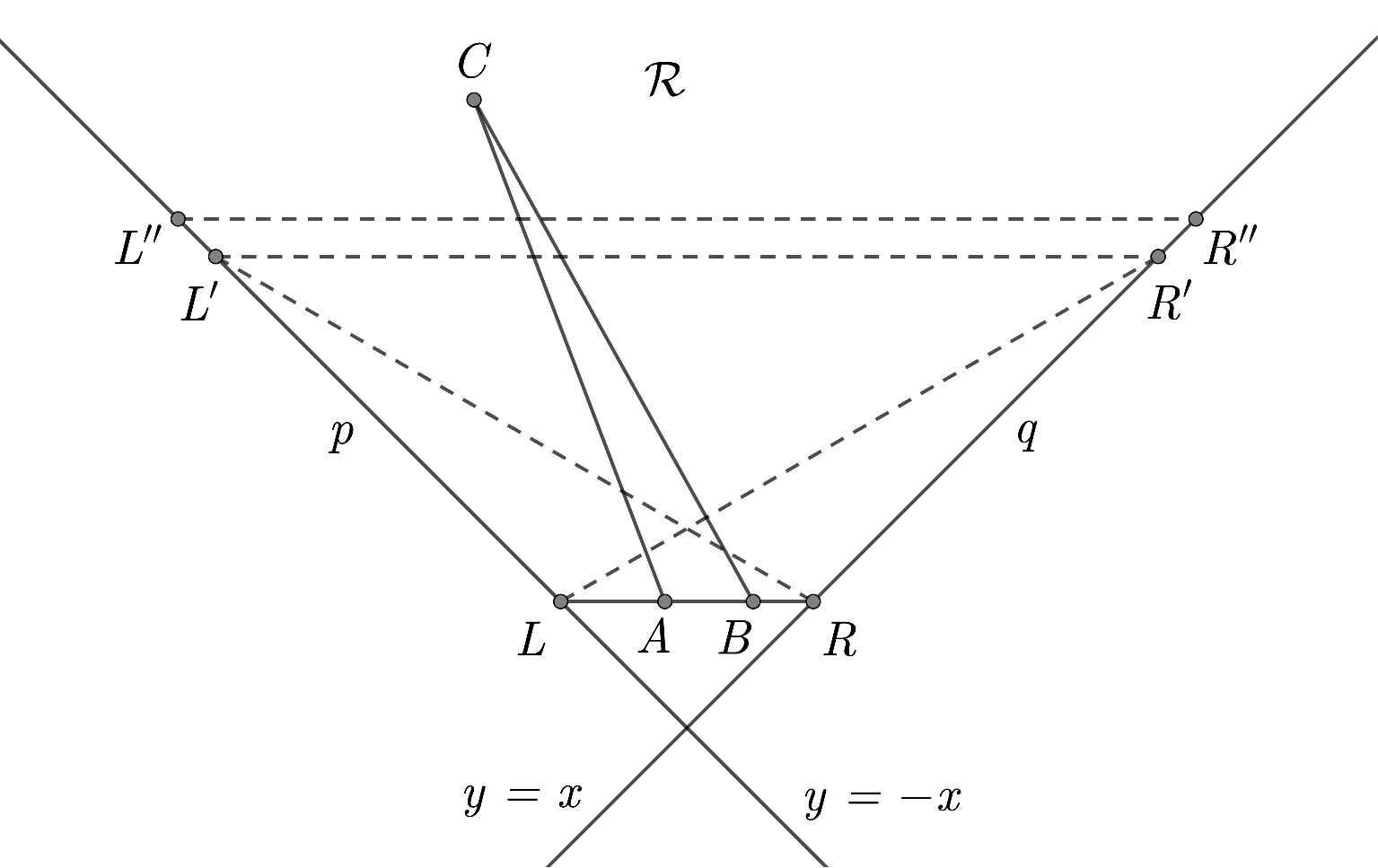}
\caption{}
\label{fig:cyclic1}
\end{figure}

We regard the positively oriented triangle $\triangle ABC$ as fixed and find it advantageous to change the coordinate system, by placing the origin at $A$ and putting $B$ on the positive part of the $x$-axis.
The new coordinates of the vertices of $\triangle ABC$ become
\[ A=(0,0), \ B=(c,0), \ C=(x_{C},y_{C}) \]
for some $x_C\in\R$ and $y_C>0$.
Lengths of its sides will be written
\[ a=|BC|, \ b=|CA|, \ c=|AB| \] 
and its angles will be denoted standardly as 
\[ \alpha=\angle BAC, \ \beta=\angle CBA, \ \gamma=\angle ACB; \]
see Figure \ref{fig:cyclic3}.
Recall that we have shown
\begin{equation}\label{eq:alphabeta}
30^\circ < \alpha,\beta < 150^\circ.
\end{equation}

The proof proceeds by ``perturbing'' the point $C$ slightly to locate $D,E$ in its small neighborhood such that $ABED$ is a non-degenerate cyclic quadrilateral of area $1$. To achieve this, we need to study analytical properties of the assignment $D\mapsto E$, which finds a unique appropriate point $E$ for many initial choices of $D$, and we only care about the cases when $D$ and $E$ are sufficiently close to $C$. Since the set $\mathcal{S}$ is ``very dense'' around $C$, it will not be able to miss all such pairs $(D,E)$.

\begin{lemma}\label{lm:thetriangle}
There exist a number $\varrho>0$, an open neighborhood $\mathcal{V}$ of the point $C$, and a $\textup{C}^1$-diffeomorphism $f\colon \mathcal{D}_{\varrho}(C) \to \mathcal{V}$ such that the following holds.
\begin{itemize}
\item One has 
\begin{equation}\label{eq:Cisfixed}
f(C)=C.
\end{equation}
\item If we denote
\begin{align*} 
\mathcal{D}^{-}_{\varrho}(C) & := \{(x,y)\in\mathcal{D}_{\varrho}(C) \,:\, y<y_C\}, \\
\mathcal{V}^{-} & := \{(x,y)\in\mathcal{V} \,:\, y<y_C\},
\end{align*}
then 
\begin{equation}\label{eq:lowerisfixed}
f(\mathcal{D}^{-}_{\varrho}(C))=\mathcal{V}^{-}.
\end{equation}
\item For every $D\in\mathcal{D}^{-}_{\varrho}(C)$ and $E\in\mathcal{V}^{-}$ one has
\begin{equation}\label{eq:whatwewant}
f(D) = E \ \Longleftrightarrow \ ABED \text{ is a cyclic quadrilateral of area } 1.
\end{equation}
\item The differential of $f$ at $C$ equals
\begin{equation}\label{eq:formulafordf}
\textup{d}f(C) = \begin{bmatrix}
1 & \frac{(\cos 2\alpha \sin\beta - \cos\alpha \sin\gamma)\sin\gamma}{\sin^3\alpha} \\[2mm]
0 & \frac{\sin^2\beta}{\sin^2\alpha}
\end{bmatrix}.
\end{equation}
\end{itemize}
\end{lemma}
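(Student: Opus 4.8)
\emph{Sketch of the proof.}
The plan is to rephrase ``$ABED$ is a cyclic quadrilateral of area $1$'' as the vanishing of an explicit smooth map and then apply the implicit function theorem around $(D,E)=(C,C)$. In the coordinates fixed above, $A=(0,0)$, $B=(c,0)$, $C=(x_C,y_C)$ with $y_C>0$ and $cy_C=2$ (since $\mathop{\textup{area}}(\triangle ABC)=1$). Every circle through $A$ and $B$ has an equation $x^2+y^2-cx+\lambda y=0$ for a unique $\lambda\in\R$, so a point $D$ with $y_D>0$ lies on exactly one of them, the one with $\lambda=\lambda(D):=-(x_D^2+y_D^2-cx_D)/y_D$, and $\lambda(\cdot)$ depends smoothly on $D$ near $C$. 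Then ``$A,B,E,D$ concyclic'' is equivalent to $G_1(D,E):=x_E^2+y_E^2-cx_E+\lambda(D)\,y_E=0$, while ``the signed area of the polygon $A\to B\to E\to D$ equals $1$'' is, by the shoelace formula, equivalent to $G_2(D,E):=cy_E+x_Ey_D-x_Dy_E-2=0$. The map $G=(G_1,G_2)$ is smooth near $(C,C)$ and $G(C,C)=0$.

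First I would compute the two partial Jacobians of $G$ at $(C,C)$; one finds $\det\partial_EG(C,C)=-|BC|^2=-a^2$ and $\det\partial_DG(C,C)=-|CA|^2=-b^2$, both nonzero since the triangle is non-degenerate. By the implicit function theorem there are $\varrho>0$, an open neighborhood of $C$, and a unique $\textup{C}^1$ (indeed $\textup{C}^\infty$) map $f$ with $f(C)=C$ and $G(D,f(D))\equiv0$, $f(D)$ being the only solution of $G(D,\cdot)=0$ near $C$; this gives \eqref{eq:Cisfixed}, and --- granting the routine non-degeneracy check below --- both directions of \eqref{eq:whatwewant}, the implication ``$\Rightarrow$'' being immediate and ``$\Leftarrow$'' the uniqueness clause. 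Differentiating $G(D,f(D))\equiv0$ gives $\textup{d}f(C)=-\big(\partial_EG(C,C)\big)^{-1}\partial_DG(C,C)$, whose determinant is $b^2/a^2=\sin^2\beta/\sin^2\alpha\neq0$; hence, after shrinking $\varrho$, the map $f\colon\mathcal{D}_\varrho(C)\to\mathcal{V}:=f(\mathcal{D}_\varrho(C))$ is a $\textup{C}^1$-diffeomorphism.

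For \eqref{eq:lowerisfixed}, the decisive observation I would use is that $G(D,D)=0$ for every $D$ on the horizontal line $\ell:=\{y=y_C\}$ through $C$: indeed $G_1(D,D)=0$ merely restates the definition of $\lambda(D)$, and $G_2(D,D)=cy_D-2=cy_C-2=0$ (reflecting that $\ell$ is exactly the set of points $P$ above $\overline{AB}$ with $\mathop{\textup{area}}(\triangle ABP)=1$). By the uniqueness clause of the implicit function theorem, $f(D)=D$ for every $D\in\ell$ near $C$, so $f$ restricts to the identity on $\ell\cap\mathcal{D}_\varrho(C)$ --- which, incidentally, forces the first column of $\textup{d}f(C)$ in \eqref{eq:formulafordf} to be $(1,0)^{\mathsf T}$ and lets one compute only its second column. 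Now put $h(D):=y_{f(D)}-y_C$; this $\textup{C}^1$ function vanishes on $\ell\cap\mathcal{D}_\varrho(C)$ and has $\tfrac{\partial h}{\partial y}(C)=\sin^2\beta/\sin^2\alpha>0$ (the $(2,2)$-entry of $\textup{d}f(C)$), so after shrinking $\varrho$ so that $\tfrac{\partial h}{\partial y}>0$ throughout $\mathcal{D}_\varrho(C)$, the identity $h(x_D,y_D)=h(x_D,y_D)-h(x_D,y_C)=\int_{y_C}^{y_D}\tfrac{\partial h}{\partial y}(x_D,s)\,\textup{d}s$ shows that $h(D)<0\iff y_D<y_C$. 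Consequently $f(D)\in\mathcal{V}^{-}\iff y_{f(D)}<y_C\iff h(D)<0\iff D\in\mathcal{D}^{-}_\varrho(C)$, and since $f\colon\mathcal{D}_\varrho(C)\to\mathcal{V}$ is a bijection this yields $f(\mathcal{D}^{-}_\varrho(C))=\mathcal{V}^{-}$.

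It remains to dispose of the routine loose ends. For $D\in\mathcal{D}^{-}_\varrho(C)$ and $E=f(D)$, the four points $A,B,D,E$ are pairwise distinct once $\varrho$ is small ($D$ and $E$ lie near $C$, which is at positive distance from the line $AB$, and $D\neq E$ because $G_2(D,D)=cy_D-2<0$ when $y_D<y_C=2/c$), and a brief argument --- via the first-order behaviour of $f$ at $C$, or by continuity --- shows that concyclicity together with the positive value of the signed area makes $A,B,E,D$ a convex cyclic quadruple, so that $ABED$ is a genuine non-degenerate cyclic quadrilateral of area $1$. I expect the only laborious part of the whole argument to be the trigonometric simplification of the individual entries of $\textup{d}f(C)$ into the exact form \eqref{eq:formulafordf} --- in particular the $(1,2)$-entry, which rests on identities like $\cos2\alpha\sin\beta-\cos\alpha\sin\gamma=-\sin\alpha\cos(\beta-\alpha)$ (using $\gamma=\pi-\alpha-\beta$) --- whereas the conceptual content is slight: the parametrization by the pencil parameter $\lambda$ and the identity ``$G(D,D)=0\Leftrightarrow D\in\ell$'' do essentially all the work.
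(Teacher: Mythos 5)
Your construction is essentially the paper's: the same two scalar conditions (the shoelace identity for the area and the pencil of circles $x^2+y^2-cx+\lambda y=0$ through $A$ and $B$, i.e.\ the circumcircle of $\triangle ABD$), the implicit function theorem at $(C,C)$ with the same nonvanishing Jacobian, the formula $\textup{d}f(C)=-(\partial_E G)^{-1}\partial_D G$ with $\det\textup{d}f(C)=b^2/a^2=\sin^2\beta/\sin^2\alpha$, and the inverse function theorem to make $f$ a $\textup{C}^1$-diffeomorphism on a small disk. Your route to \eqref{eq:lowerisfixed} is a mild variant: the paper proves $D\in r\Leftrightarrow f(D)\in r$ and then uses continuity, connectedness of the upper half-disk, and bijectivity, whereas you note that $f$ fixes the line $y=y_C$ pointwise and integrate $\partial h/\partial y>0$ vertically; both arguments are fine, and yours has the small bonus of explaining the first column of \eqref{eq:formulafordf} for free.

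The one step whose stated justification does not hold up is the ``$\Rightarrow$'' half of \eqref{eq:whatwewant}. The claim that ``concyclicity together with the positive value of the signed area makes $A,B,E,D$ a convex cyclic quadruple'' is false as a general implication: if the cyclic order on the circle were $A,B,D,E$, the vertex sequence $A,B,E,D$ is a crossed quadrilateral, and by the fan decomposition $\operatorname{shoelace}(A,B,E,D)=\operatorname{shoelace}(A,B,D)+\operatorname{shoelace}(B,E,D)$ its shoelace value can still equal $1$ (the second term is then negative, which is compensated whenever $\mathop{\textup{area}}(\triangle ABD)>1$, i.e.\ when $D$ lies above the line $r$). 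So positivity of the signed area does not exclude the crossed configuration; what excludes it is precisely the hypothesis $D\in\mathcal{D}^{-}_{\varrho}(C)$, i.e.\ $\mathop{\textup{area}}(\triangle ABD)<1$: then $\operatorname{shoelace}(B,E,D)=1-\mathop{\textup{area}}(\triangle ABD)>0$, so the triangles $ABD$ and $BED$ are both positively oriented, the points occur on the circle in the order $A,B,E,D$, and $ABED$ is a genuine convex cyclic quadrilateral whose shoelace value is its area. This is exactly where the paper uses the restriction to the lower half-disk (``$D$ below $r$ means $\mathop{\textup{area}}(\triangle ABD)<1$, so the points on $k$ appear in the order $A,B,E,D$''), and it is the reason the lemma is formulated on $\mathcal{D}^{-}_{\varrho}(C)$ at all; your continuity/first-order remark could be turned into a proof, but as written it never brings $y_D<y_C$ into play, and without it the step genuinely fails. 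The rest of your loose-end checks (distinctness of the four points, the orientation in the ``$\Leftarrow$'' direction) are fine.
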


\begin{figure}
\includegraphics[width=0.45\linewidth]{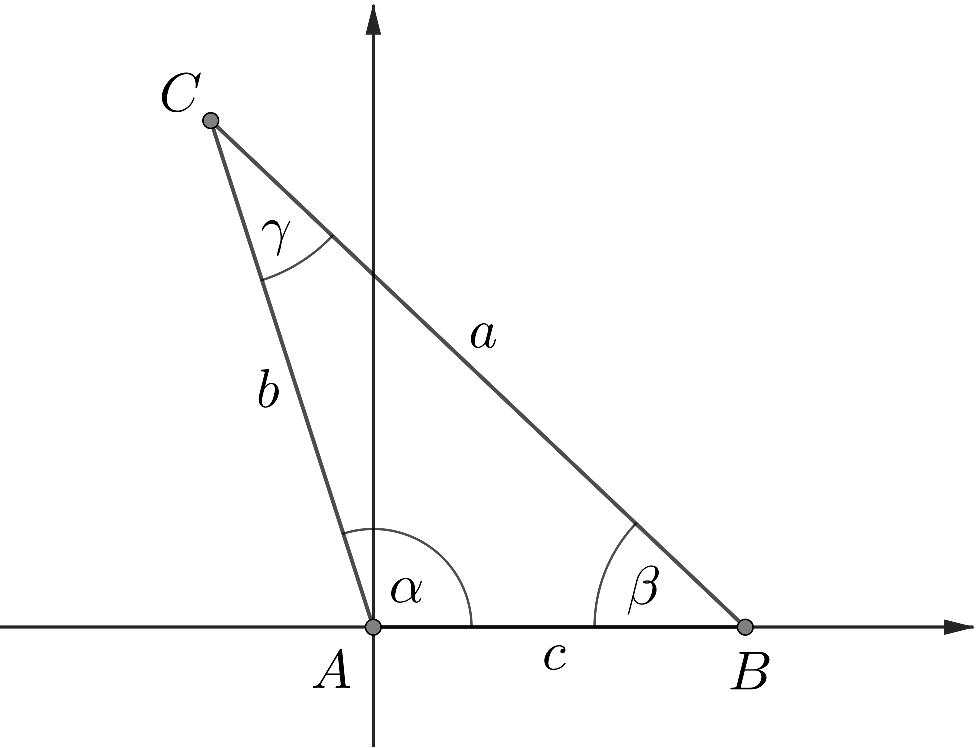}
\caption{}
\label{fig:cyclic3}
\end{figure}

The reason why we restrict ourselves to the half-disk $\mathcal{D}^{-}_{\varrho}(C)$ is that $D,A,B$ cannot be consecutive vertices of an area $1$ cyclic quadrilateral if $D$ lies on or above the line $r$ passing through $C$ and parallel to the $x$-axis; see Figure \ref{fig:cyclic2}.

\begin{proof}[Proof of Lemma \ref{lm:thetriangle}]
We will use the implicit function theorem to obtain the existence of the desired function $f\colon D\mapsto E$, defined locally, in a neighborhood of $C$, from a system of $2$ equations in $2$ variables, which encode the geometric requirements, i.e., that the quadrilateral $ABED$ is cyclic and has area $1$. We regard $D$ as fixed and try to locate the point $E=f(D)$.
Introduce the Cartesian coordinates of the two points, 
\[ D=(x_{D},y_{D}), \ E=(x_{E},y_{E}). \]

Using the so-called ``shoelace formula'' for the area in the Cartesian plane, we easily obtain
\[ \frac{1}{2} \big( 0 \cdot 0 - c \cdot 0 + c \cdot y_E - x_E \cdot 0 + x_E \cdot y_D - x_D \cdot y_E + x_D \cdot 0 - 0 \cdot y_D \big) = 1, \]
i.e., 
\begin{equation}\label{eq:Shoelace}
y_{D}x_{E} + (c-x_{D})y_{E} - 2 = 0
\end{equation}
holds whenever $ABED$ is a quadrilateral of area $1$. We conclude that $E$ needs to lie on the line $l$ given by the equation 
\begin{equation}\label{eq:equofl}
y_D x + (c-x_D) y - 2 =0;
\end{equation}
see Figure \ref{fig:cyclic2}.

Furthermore, we require that the points $A,B,E,D$ are concyclic. In order to find the equation of the circumcircle $k$ of the triangle $\triangle ABD$, we observe that its center $O(x_O,y_O)$ lies on the perpendicular bisectors of the sides $\overline{AB}$ and $\overline{AD}$, which are respectively given by
\[ x=\frac{c}{2}, \quad y=-\frac{x_{D}}{y_{D}}\Big(x-\frac{x_{D}}{2}\Big) + \frac{y_{D}}{2}. \]
Solving this system gives
\[ (x_O,y_O) = \Big( \frac{c}{2}, \frac{x_D^2 + y_D^2 - cx_D}{2y_D} \Big), \]
while the equation of circle $k$ reads
\[ (x-x_O)^2 + (y-y_O)^2 = x_O^2 + y_O^2. \]
Plugging $(x,y)=(x_{E},y_{E})$ into the last equation and simplifying, we obtain
\begin{equation}\label{eq:Circumcircle}
x_{E}^2 + y_{E}^2 - cx_{E} + \frac{c x_{D} - x_{D}^2 - y_{D}^2}{y_{D}}y_{E} = 0.
\end{equation}
The point $E$ lies at the intersection of $l$ and $k$ (see Figure \ref{fig:cyclic2} again), so its coordinates $x_E,y_E$ need to satisfy both equations \eqref{eq:Shoelace} and \eqref{eq:Circumcircle}.

\begin{figure}
\includegraphics[width=0.45\linewidth]{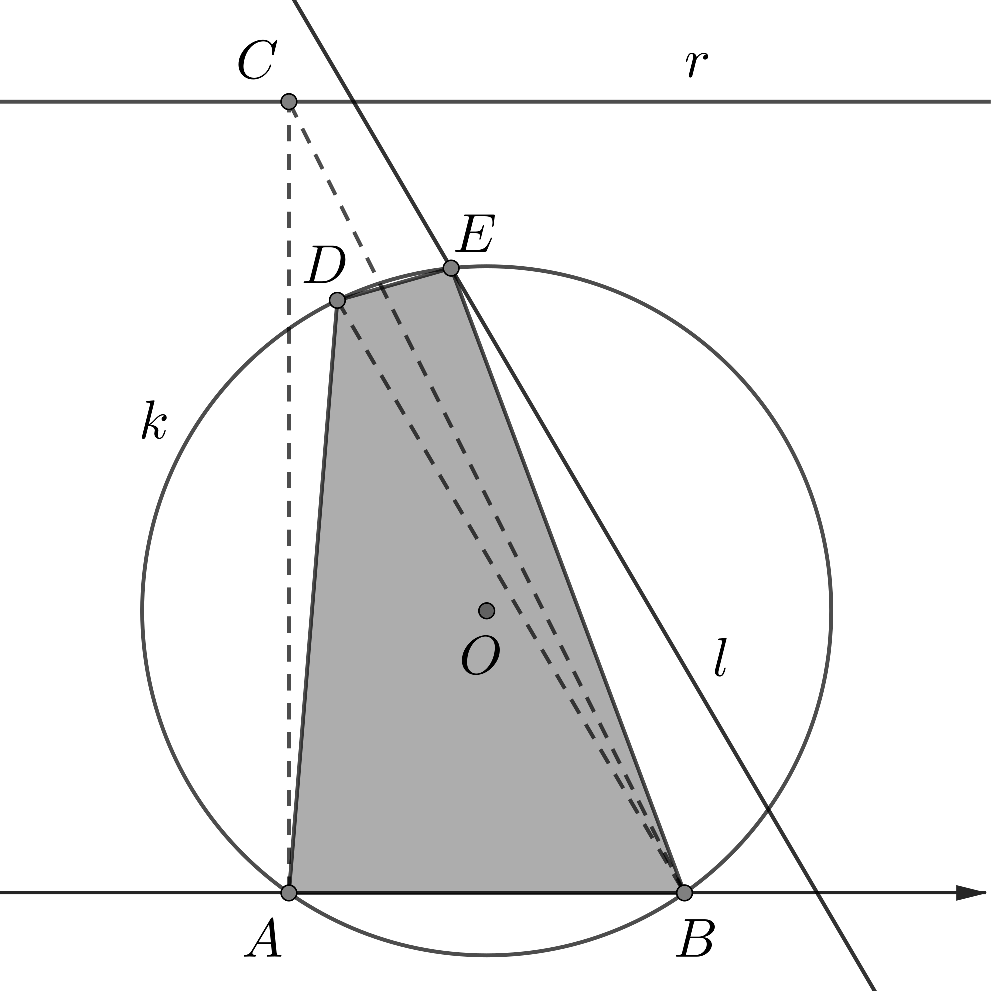}
\caption{}
\label{fig:cyclic2}
\end{figure}

Motivated by \eqref{eq:Shoelace} and \eqref{eq:Circumcircle}, we consider the function 
\[ \Phi\colon\R\times(0,\infty)\times\mathbb{R}^2 \to \mathbb{R}^2, \]  
defined by
\[ \Phi(x_1,y_1,x_2,y_2) = \Big( y_1 x_2 + (c-x_1) y_2 - 2, \ x_2^2 + y_2^2 - cx_2 + \frac{c x_1 - x_1^2 - y_1^2}{y_1}y_2 \Big). \]
Note that $\mathop{\textup{area}}(\triangle ABC)  = 1$ can be rewritten as $cy_C=2$, which easily implies 
\begin{equation}\label{eq:PhiatCC}
\Phi(C,C) = \Phi(x_{C},y_{C},x_{C},y_{C}) = (cy_C-2, 0) = (0,0).
\end{equation} 
Furthermore, we calculate the differential in the first two variables $(x_1,y_1)$ at $(C,C)$ as
\[ \textup{d}_{1}\Phi(C,C) 
=\begin{bmatrix}
-y_{C} & x_{C} \\
c-2x_{C} & \frac{x_{C}^2 - cx_{C} - y_{C}^2}{y_{C}}
\end{bmatrix} \]
and the differential in the last two variables $(x_2,y_2)$ at $(C,C)$ as
\[ \textup{d}_{2}\Phi(C,C) 
=\begin{bmatrix}
y_{C} & c-x_{C} \\
2x_{C} - c & \frac{- x_{C}^2 + cx_{C} + y_{C}^2}{y_{C}}
\end{bmatrix}. \]
Note that the determinant of the latter operator,
\[ \det \textup{d}_{2}\Phi(C,C) = (x_{C}-c)^2 + y_{C}^2, \]
is strictly positive because of $y_{C}>0$; in particular it is non-zero.
Applying the implicit function theorem we obtain open neighborhoods  $\mathcal{U}',\mathcal{V}' \subseteq \mathcal{R}$ of the point $C$ such that there exists a unique function $f\colon\mathcal{U}'\to\mathcal{V}'$ satisfying
\begin{equation}\label{eq:implexpl} 
\Phi(D,E) = 0 \ \Longleftrightarrow\ E=f(D)
\end{equation}
for every $D\in\mathcal{U}'$ and $E\in\mathcal{V}'$.
In particular, a consequence of \eqref{eq:PhiatCC} is $f(C)=C$.
Furthermore, the implicit function theorem also guarantees that $f$ is continuously differentiable and its differential can be computed by the formula
\[ \textup{d}f(C) = - \big(\textup{d}_{2}\Phi(C,C)\big)^{-1} \textup{d}_{1}\Phi(C,C). \]
After some calculation using the above expressions for $\textup{d}_{1}\Phi(C,C)$ and $\textup{d}_{2}\Phi(C,C)$, we obtain
\[ \textup{d}f(C) = -\frac{1}{(x_{C}-c)^2 + y_{C}^2}\begin{bmatrix}
\frac{-x_{C}^2+cx_{C}+y_{C}^2}{y_{C}} & x_{C}-c \\
c-2x_{C} & y_{C}
\end{bmatrix} 
\begin{bmatrix}
-y_{C} & x_{C} \\
c-2x_{C} & \frac{x_{C}^2 - cx_{C} - y_{C}^2}{y_{C}}
\end{bmatrix}, \]
i.e.,
\[ \textup{d}f(C)
= \begin{bmatrix}
1 & \frac{c(x_{C}^2 - cx_{C} - y_{C}^2)}{((x_C-c)^2+y_C^2) y_{C}} \\[2mm]
0 & \frac{x_C^2+y_C^2}{(x_C-c)^2+y_C^2}
\end{bmatrix}. \]
Using 
\[ x_C^2+y_C^2 = b^2, \quad (x_C-c)^2+y_C^2 = a^2, \]
which is clear from Figure \ref{fig:cyclic3}, we simplify $\textup{d}f(C)$ as
\[ \textup{d}f(C)
= \begin{bmatrix}
1 & \frac{c(x_{C}^2 - cx_{C} - y_{C}^2)}{a^2 y_{C}} \\[2mm]
0 & \frac{b^2}{a^2}
\end{bmatrix} \]
and then the sine theorem for $\triangle ABC$ and some trigonometric manipulation easily yield the desired formula \eqref{eq:formulafordf}.
 
Since $\det \textup{d}f(C) \neq 0$, we can apply the inverse function theorem to obtain open neighborhoods $\mathcal{U}\subseteq\mathcal{U}'$ and $\mathcal{V}\subseteq\mathcal{V}'$ of the point $C$ such that $f\colon\mathcal{U} \to \mathcal{V}$ is a $\textup{C}^1$-diffeomorphism.
By further shrinking, we can assume that $\mathcal{U}$ is an open disk $\mathcal{D}_{\varrho}(C)$ of some radius $\varrho>0$.
Besides the notation introduced in the lemma statement, also denote
\begin{align*} 
\mathcal{D}^{+}_{\varrho}(C) & := \{(x,y)\in\mathcal{D}_{\varrho}(C) \,:\, y>y_C\}, \\
\mathcal{D}^{0}_{\varrho}(C) & := \{(x,y)\in\mathcal{D}_{\varrho}(C) \,:\, y=y_C\}, \\
\mathcal{V}^{+} & := \{(x,y)\in\mathcal{V} \,:\, y>y_C\}, \\
\mathcal{V}^{0} & := \{(x,y)\in\mathcal{V} \,:\, y=y_C\}.
\end{align*}

For every $D\in\mathcal{D}^{-}_{\varrho}(C)$, the first part of the proof and \eqref{eq:implexpl} guarantee that $E=f(D)$ is the unique point lying both on the circle $k$ through $A,B,D$ and the line $l$ defined by \eqref{eq:equofl}. Since $D$ is below the line $r$, which means $\mathop{\textup{area}}(\triangle ABD)<1$, the points on the circle $k$ really appear in the order $A,B,E,D$ and form a cyclic quadrilateral of area $1$.
Moreover, $\mathop{\textup{area}}(\triangle ABE)<\mathop{\textup{area}}(\triangle ABED)$, so $E$ is also below the line $r$, i.e., $E\in\mathcal{D}^{-}_{\varrho}(C)$. This proves $f(\mathcal{D}^{-}_{\varrho}(C))\subseteq\mathcal{V}^{-}$ and tentatively also \eqref{eq:whatwewant} provided that we can show the equality in this last set inclusion, i.e., that \eqref{eq:lowerisfixed} holds.

Next, we claim that for every $D\in\mathcal{D}_{\varrho}(C)$ one has
\begin{equation}\label{eq:whenwehave0} 
D \in r \ \Longleftrightarrow\ E=f(D) \in r.
\end{equation}
In fact, \eqref{eq:whenwehave0} then further implies $E=D$, but we do not need that in the following text.
Even if this claim is geometrically evident, we give its rigorous analytical proof.
Namely, the definition of $f$ guarantees $\Phi(x_D,y_D,x_E,y_E)=(0,0)$ provided that $E=f(D)$, and the equality of the first coordinates yields
\begin{equation}\label{eq:howweget0}
y_D x_E + (c-x_D) y_E = 2 = c y_C,
\end{equation}
which enables us to express $x_E$ in terms of $x_D$, $y_D$, and $y_E$.
Now, if $y_D=y_C$, then the second equality simplifies, after some calculation, as
\[ (y_E - y_C) \underbrace{y_E \big( (x_D-c)^2 + y_C^2 \big)}_{>0} = 0, \]
which implies $y_E=y_C$.
Conversely, if $y_E=y_C$, then the second inequality easily reduces to
\[ (y_D-y_C) \underbrace{y_C (x_D^2+y_D^2)}_{>0} = 0, \]
which gives $y_D=y_C$.
(After we conclude $y_C=y_D=y_E$, then \eqref{eq:howweget0} also implies $x_E=x_D$, so that we indeed have $E=D$.)
Note that \eqref{eq:whenwehave0} can be equivalently written as 
$f(\mathcal{D}^{0}_{\varrho}(C))=\mathcal{V}^{0}$.

Finally, the continuity of $f$ and the connectedness of $\mathcal{D}^{+}_{\varrho}(C)$ imply $f(\mathcal{D}^{+}_{\varrho}(C))\subseteq\mathcal{V}^{+}$.
From all this we conclude that the image $f(\mathcal{D}^{-}_{\varrho}(C))$ needs to be precisely equal to $\mathcal{V}^{-}$.
\end{proof}

Estimating the operator norm of $\textup{d}f(C)$ by the $\ell^2$-norm of its matrix entries (the so-called Frobenius matrix norm), from \eqref{eq:formulafordf} and \eqref{eq:alphabeta} we get
\begin{align*} 
\|\textup{d}f(C)\|_{\textup{op}} & \stackrel{\eqref{eq:formulafordf}}{\leq} \sqrt{1^2 + \bigg(\frac{(\cos 2\alpha \sin\beta - \cos\alpha \sin\gamma)\sin\gamma}{\sin^3\alpha}\bigg)^2 + \Big(\frac{\sin^2\beta}{\sin^2\alpha}\Big)^2} \\
& \stackrel{\eqref{eq:alphabeta}}{\leq} \sqrt{1 + \Big(\frac{2}{\sin^3 30^\circ}\Big)^2 + \Big(\frac{1}{\sin^2 30^\circ}\Big)^2} < 20. 
\end{align*}
Also,
\[ \det(\textup{d}f(C)) \stackrel{\eqref{eq:formulafordf}}{=} \frac{\sin^2\beta}{\sin^2\alpha} \stackrel{\eqref{eq:alphabeta}}{\geq} \sin^2 30^\circ = \frac{1}{4}. \]
By continuity of the maps
\[ P \mapsto \|\textup{d}f(P)\|_{\textup{op}} \quad\text{and}\quad P \mapsto \det(\textup{d}f(P)) \]
there exists $0<\varepsilon_2\leq\varrho$ such that
\begin{equation}\label{eq:theestimate1}
\|\textup{d}f(P)\|_{\textup{op}} <20 \quad\text{for every } P\in\mathcal{D}_{\varepsilon_2}(C)
\end{equation}
and
\begin{equation}\label{eq:theestimate2}
\det(\textup{d}f(P))>\frac{1}{5} \quad\text{for every } P\in\mathcal{D}_{\varepsilon_2}(C).
\end{equation}
Next, by the mean value inequality for vector-valued functions, we conclude that
\[ |Cf(P)| \stackrel{\eqref{eq:Cisfixed}}{=} |f(C)f(P)| \stackrel{\eqref{eq:theestimate1}}{\leq} 20 |CP| \]
holds for every $P\in\mathcal{D}_{\varepsilon_2}(C)$, so
\begin{equation}\label{eq:theinclusion}
f(\mathcal{D}_{\varepsilon}(C)) \subseteq \mathcal{D}_{20\varepsilon}(C) \quad\text{for every } \varepsilon\in(0,\varepsilon_2]. 
\end{equation}
Let $\mathcal{D}_{\varepsilon_3}(C)$, for some $\varepsilon_3>0$, be a disk around $C$ fully contained in $\mathcal{V}$. 

\begin{figure}
\includegraphics[width=0.75\linewidth]{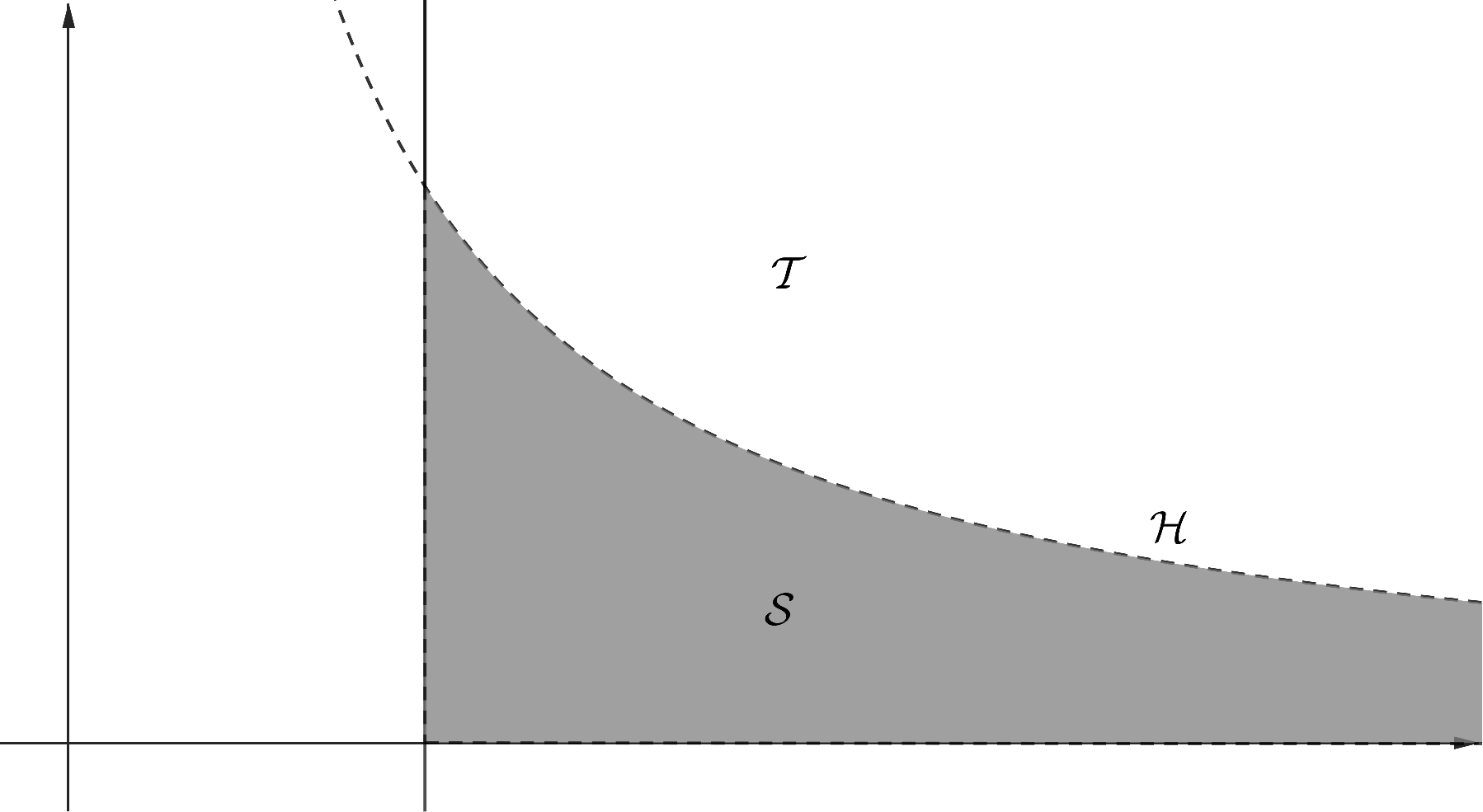}
\caption{}
\label{fig:conv1}
\end{figure}

From \eqref{eq:LebesguePoint} we conclude
\[ \frac{1}{2}\cdot\frac{\mathop{\textup{area}}(\mathcal{S} \cap \mathcal{D}^{-}_{\varepsilon}(C))}{\mathop{\textup{area}}(\mathcal{D}^{-}_{\varepsilon}(C))}  + \frac{1}{2}\cdot\underbrace{\frac{\mathop{\textup{area}}(\mathcal{S} \cap \mathcal{D}^{+}_{\varepsilon}(C))}{\mathop{\textup{area}}(\mathcal{D}^{+}_{\varepsilon}(C))}}_{\leq1} \stackrel{\eqref{eq:LebesguePoint}}{>} \frac{9999}{10000} \]
whenever $\varepsilon\leq\varepsilon_1$, so that
\begin{equation}\label{eq:LebesguePoint2}
\frac{\mathop{\textup{area}}(\mathcal{S} \cap \mathcal{D}^{-}_{\varepsilon}(C))}{\mathop{\textup{area}}(\mathcal{D}^{-}_{\varepsilon}(C))} > \frac{4999}{5000} \quad\text{for every } \varepsilon\in(0,\varepsilon_1]. 
\end{equation}
Finally, take
\[ \delta := \min\{\varepsilon_1,\varepsilon_2,\varepsilon_3\} \]
and define the set
\[ \mathcal{A} := \mathcal{S} \cap \mathcal{D}^{-}_{\delta}(C), \]
so that \eqref{eq:LebesguePoint2} immediately gives
\[ \mathop{\textup{area}}(\mathcal{A}) > \frac{4999}{5000} \mathop{\textup{area}}(\mathcal{D}^{-}_{\delta}(C)). \]
Also define
\[ \mathcal{B} := f\big(\mathcal{S} \cap \mathcal{D}^{-}_{\delta/20}(C)\big), \]
so that \eqref{eq:lowerisfixed} and \eqref{eq:theinclusion} guarantee $\mathcal{B}\subseteq\mathcal{D}^{-}_{\delta}(C)$, while the change of variables formula (e.g. \cite[Theorem 2.47]{Fol99}), the fact that $f$ is $\textup{C}^1$-diffeomorphism, \eqref{eq:theestimate2}, and \eqref{eq:LebesguePoint2} give
\begin{align*}
\mathop{\textup{area}}(\mathcal{B})
& = \int_{\mathcal{S} \cap \mathcal{D}^{-}_{\delta/20}(C)} |\det(\textup{d}f(P))| \,\textup{d}P 
\stackrel{\eqref{eq:theestimate2}}{\geq} \frac{1}{5}\mathop{\textup{area}}\big(\mathcal{S} \cap \mathcal{D}^{-}_{\delta/20}(C)\big) \\
& \stackrel{\eqref{eq:LebesguePoint2}}{>} \frac{1}{5} \cdot\frac{4999}{5000}\cdot \frac{1}{20^2} \mathop{\textup{area}}(\mathcal{D}^{-}_{\delta}(C))
> \frac{1}{5000}\mathop{\textup{area}}(\mathcal{D}^{-}_{\delta}(C)).
\end{align*}
Therefore, $\mathcal{A}$ and $\mathcal{B}$ are two measurable subsets of $\mathcal{D}^{-}_{\delta}(C)$ satisfying
\[ \mathop{\textup{area}}(\mathcal{A}) + \mathop{\textup{area}}(\mathcal{B}) > \mathop{\textup{area}}(\mathcal{D}^{-}_{\delta}(C)), \]
so they cannot be disjoint.
Let $E$ be a point from the intersection $\mathcal{A}\cap\mathcal{B}$. By the definition of $\mathcal{A}$ we have $E\in\mathcal{S} \cap \mathcal{D}^{-}_{\varepsilon_3}(C)\subseteq \mathcal{S}\cap\mathcal{V}^{-}$, while the definition of $\mathcal{B}$ gives a point $D\in\mathcal{S}\cap\mathcal{D}^{-}_{\varrho}(C)$ such that $E=f(D)$.
Now we know that $A,B,D,E\in\mathcal{S}$ and \eqref{eq:whatwewant} finalizes the proof of Theorem \ref{thm:cyclic} by concluding that $ABED$ is a cyclic quadrilateral of unit area.


\section{Proof of Theorem \ref{thm:congruent}}

An example of a set with the desired property is
\[ \mathcal{S} := \{ (x,y)\in\R^2 \,:\, x>1,\, y>0,\, 4xy<1 \}. \]
Its upper boundary is a piece of hyperbola,
\[ \mathcal{H} := \{ (x,y)\in\R^2 \,:\, x>1,\, 4xy=1 \}, \]
which is, at the same time, the lower boundary of the closed convex set
\[ \mathcal{T} := \{ (x,y)\in\R^2 \,:\, x\geq1,\, 4xy\geq1 \}; \]
see Figure \ref{fig:conv1}.
Note that
\[ \mathop{\textup{area}}(\mathcal{S}) = \int_{1}^{\infty} \frac{\textup{d}x}{4x} = \infty. \]

\begin{figure}
\includegraphics[width=0.73\linewidth]{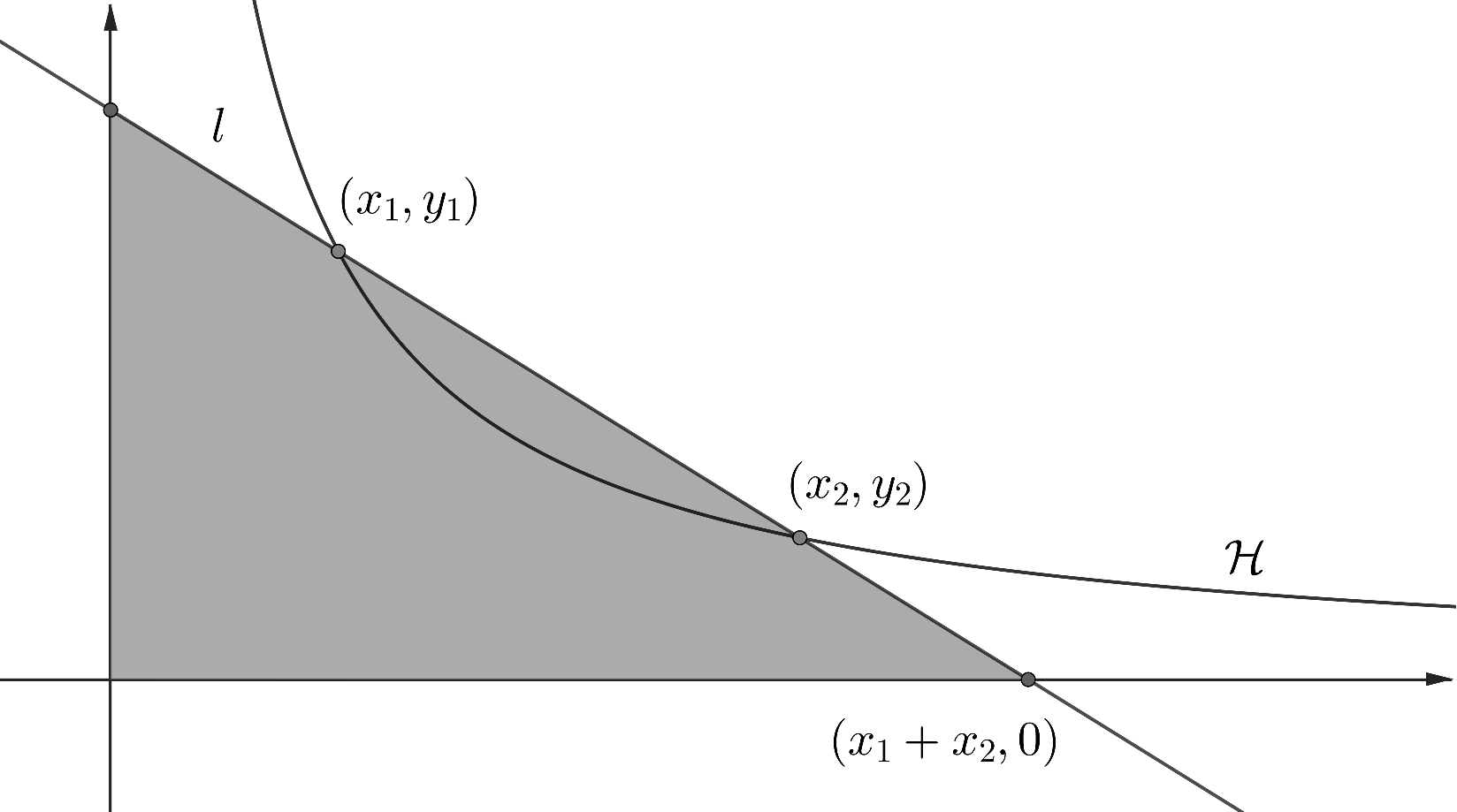}
\caption{}
\label{fig:conv2}
\end{figure}

\begin{lemma}\label{lm:hyparea}
Let $l$ be the line through any two points $(x_1,y_1)$ and $(x_2,y_2)$ on $\mathcal{H}$.
Then the line $l$ intersects the $x$-axis at the point $(x_1+x_2,0)$ and the triangle determined by $l$ and the coordinate axes has area precisely
\begin{equation}\label{eq:trianglearea}
\frac{1}{2} + \frac{(x_1-x_2)^2}{8 x_1 x_2}. 
\end{equation}
In the limiting case, when the two points on $\mathcal{H}$ coincide, the line $l$ becomes the tangent on $\mathcal{H}$ at $(x_1,y_1)$ and the above area is always equal to $1/2$.
\end{lemma}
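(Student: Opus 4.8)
The plan is a direct computation using the rational parametrization of $\mathcal{H}$.

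First I would write points on $\mathcal{H}$ as $(x,\tfrac{1}{4x})$ for $x>1$, so that the two given points are $(x_1,\tfrac{1}{4x_1})$ and $(x_2,\tfrac{1}{4x_2})$. Assuming first $x_1\neq x_2$, the slope of the line $l$ through them is
\[
\frac{\frac{1}{4x_2}-\frac{1}{4x_1}}{x_2-x_1}=\frac{x_1-x_2}{4x_1x_2(x_2-x_1)}=-\frac{1}{4x_1x_2},
\]
where the factor $x_2-x_1$ has been cancelled. Substituting this slope into the point–slope form of $l$ and setting $y=0$ yields the $x$-intercept $x_1+x_2$, which is the first assertion; setting $x=0$ yields the $y$-intercept $\tfrac{x_1+x_2}{4x_1x_2}$. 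Since $x_1,x_2>1$, both intercepts are positive, so $l$ together with the two coordinate axes bounds a genuine (non-degenerate) right triangle with legs $x_1+x_2$ and $\tfrac{x_1+x_2}{4x_1x_2}$ lying along the axes.

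Next I would compute the area of this triangle as
\[
\frac12\,(x_1+x_2)\cdot\frac{x_1+x_2}{4x_1x_2}=\frac{(x_1+x_2)^2}{8x_1x_2},
\]
and then rewrite the numerator as $(x_1+x_2)^2=(x_1-x_2)^2+4x_1x_2$ to arrive at $\tfrac12+\tfrac{(x_1-x_2)^2}{8x_1x_2}$, which is exactly \eqref{eq:trianglearea}. For the limiting case, note that as $x_2\to x_1$ the secant slope $-\tfrac{1}{4x_1x_2}$ converges to $-\tfrac{1}{4x_1^2}$, which is the derivative of $x\mapsto\tfrac{1}{4x}$ at $x_1$, so $l$ tends to the tangent line to $\mathcal{H}$ at $(x_1,\tfrac{1}{4x_1})$; moreover in this case the term $\tfrac{(x_1-x_2)^2}{8x_1x_2}$ vanishes, leaving the area equal to $1/2$. (Alternatively, the tangent at $(x_1,\tfrac{1}{4x_1})$ has intercepts $2x_1$ and $\tfrac{1}{2x_1}$, again enclosing a triangle of area $1/2$.)

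I do not expect any genuine obstacle here; the statement is essentially a one-line algebraic identity once the line is written down. The only points requiring a word of care are the cancellation of $x_2-x_1$ in the slope formula — so that the slope is first derived for $x_1\neq x_2$ and then extended to the tangent by differentiation/continuity — and the observation that, because $x_1,x_2>1$, the line $l$ actually cuts off a non-degenerate triangle in the first quadrant rather than a degenerate configuration.
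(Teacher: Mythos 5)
Your computation is correct and follows essentially the same route as the paper: write down the line through the two points of $\mathcal{H}$, find the intercepts $x_1+x_2$ and $\tfrac{x_1+x_2}{4x_1x_2}$, compute the triangle area $\tfrac{(x_1+x_2)^2}{8x_1x_2}$, and rewrite it as \eqref{eq:trianglearea}, with the tangent case handled by the same direct calculation giving intercepts $2x_1$ and $\tfrac{1}{2x_1}$. No issues.
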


\begin{proof}[Proof of Lemma \ref{lm:hyparea}]
Consult Figure \ref{fig:conv2}. The equation of the line through $(x_1,y_1)$ and $(x_2,y_2)$ reads
\[ y - y_1 = \frac{y_2-y_1}{x_2-x_1} (x - x_1), \]
while $y_1=1/(4x_1)$ and $y_2=1/(4x_2)$ enable its simplification as
\[ y = \frac{-x+x_1+x_2}{4x_1 x_2}. \]
The two segments that $l$ cuts of from the axes have lengths $x_1+x_2$ and $(x_1+x_2)/(4x_1 x_2)$, so the area of the triangle in question equals $(x_1+x_2)^2/(8x_1 x_2)$, which easily transforms into \eqref{eq:trianglearea}. 

A similar computation finds the equation of the tangent line on $\mathcal{H}$ at $(x_1,y_1)$ as
\[ y = \frac{-x+2x_1}{4x_1^2}, \]
so the segments cut off from the axes now have lengths $2x_1$ and $1/(2x_1)$, while the formed triangle has area $1/2$.
\end{proof}

Now we turn to the actual proof of the second theorem.
Suppose that there is a convex polygon $\mathcal{P}$ with $\mathop{\textup{area}}(\mathcal{P})\geq1$, all sides of length $a>0$, and all vertices contained in the above set $\mathcal{S}$. We will gradually narrow down several possibilities until we arrive at a final contradiction.

Let us first suppose that $\mathcal{P}$ is fully contained in the region $\mathcal{S}$; see Figure \ref{fig:conv3}. Then $\mathcal{P}$ and $\mathcal{T}$ are disjoint convex sets, and they can be separated by a line, via an elementary version of the Hahn-Banach separation theorem. By moving the separation line upwards until it touches $\mathcal{T}$, and possibly rotating it counterclockwise if the touching point happens to be $(1,1/4)$, we obtain a tangent $l$ on $\mathcal{H}$ such that the polygon $\mathcal{P}$ lies below it. The area of $\mathcal{P}$ can then be bounded from above by the area of the triangle formed by $l$ and the coordinate axes, so Lemma \ref{lm:hyparea} gives $\mathop{\textup{area}}(\mathcal{P})\leq 1/2$, which is a contradiction.

\begin{figure}
\includegraphics[width=0.6\linewidth]{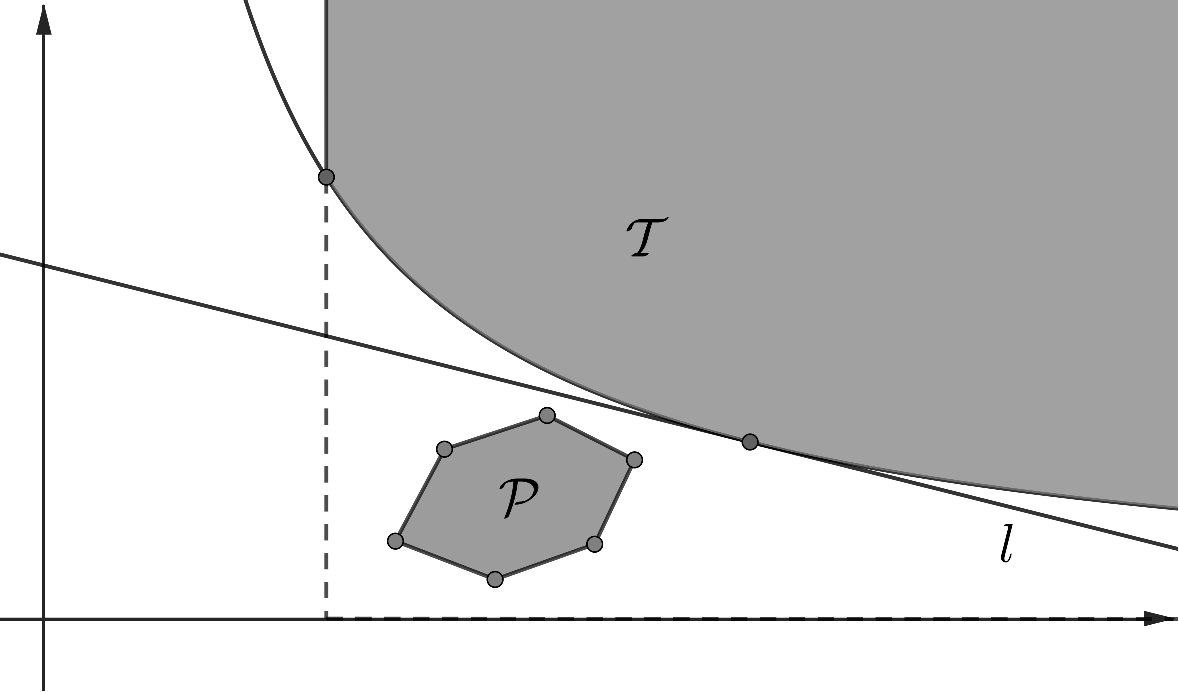}
\caption{}
\label{fig:conv3}
\end{figure}

Thus, $\mathcal{T}$ actually intersects the polygon $\mathcal{P}$, but it can intersect only one of its sides contained in the upper boundary of $\mathcal{P}$, call it $\overline{AB}$; see Figure \ref{fig:conv4}. Otherwise, it would also contain some of the vertices of $\mathcal{P}$, but we assumed that all of the vertices are in the region $\mathcal{S}$.
Let $(x_1,y_1)$ and $(x_2,y_2)$, for $x_1\leq x_2$, be the intersection points of $\mathcal{H}$ and the side $\overline{AB}$, allowing the possibility that the two points coincide.
Also, let $l$ be the line through $(x_1,y_1)$ and $(x_2,y_2)$, or the line tangent to $\mathcal{H}$ at $(x_1,y_1)=(x_2,y_2)$.
Observing that $\mathcal{P}$ is, by its convexity, again contained in the triangle formed by $l$ and the coordinate axes, the formula \eqref{eq:trianglearea} from Lemma \ref{lm:hyparea} now yields
\[ \mathop{\textup{area}}(\mathcal{P})\leq \frac{1}{2} + \frac{(x_1-x_2)^2}{8 x_1 x_2} \leq \frac{1}{2} + \frac{a^2}{8 x_1 x_2}. \]
We clearly arrive at the contradiction with $\mathop{\textup{area}}(\mathcal{P})\geq1$ again if
\begin{itemize}
\item $a<2$ \ (as $x_2\geq x_1\geq 1$),
\item or $x_1\geq 7a/8$ \ (as then also $x_2\geq7a/8$).
\end{itemize}

\begin{figure}
\includegraphics[width=0.57\linewidth]{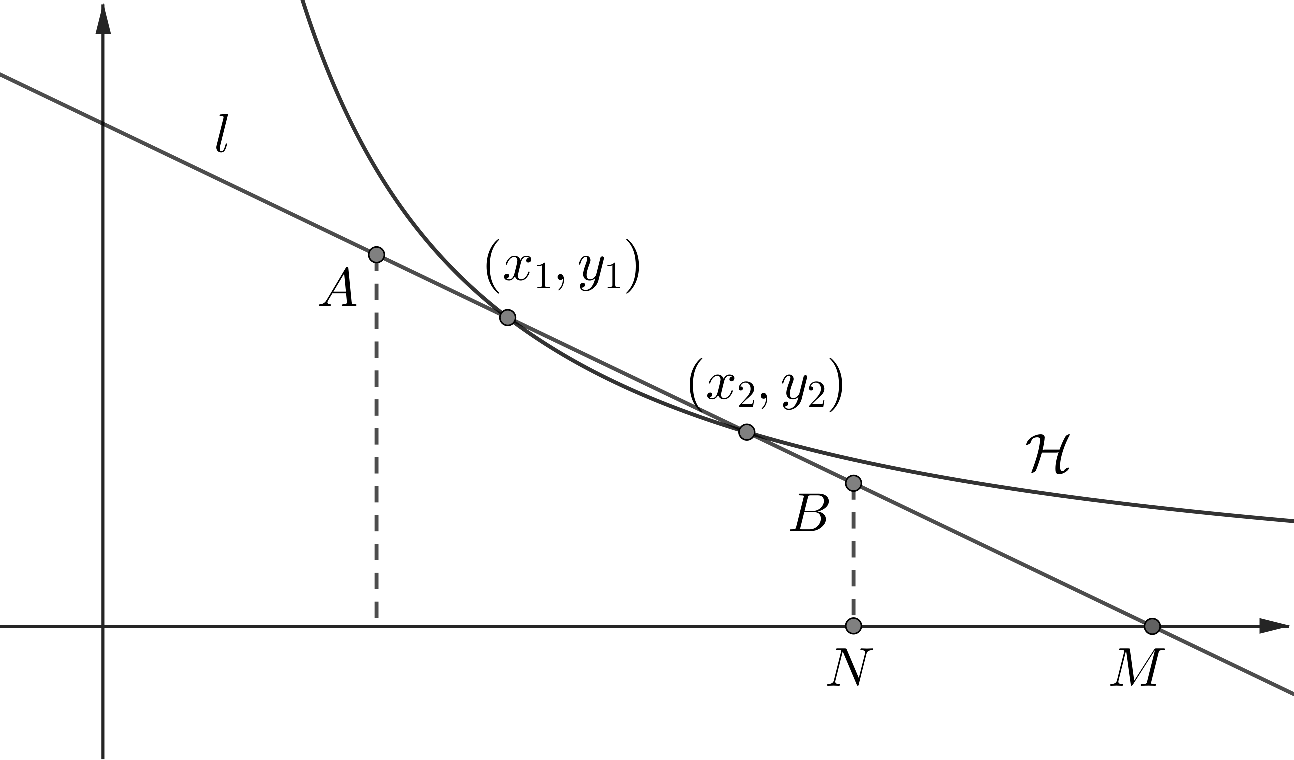}
\caption{}
\label{fig:conv4}
\end{figure}

Thus, in the following we additionally assume that $a\geq 2$ and $x_1<7a/8$.
Without loss of generality, $A$ has a smaller $x$-coordinate than $B$. However, $A$ is then contained in the rectangle $[1,7a/8]\times[0,1/4]$, the diagonal of which is
\[ \sqrt{\Big(\frac{7a}{8}-1\Big)^2+\Big(\frac{1}{4}\Big)^2}<a, \] 
and there is no room inside that rectangle for another side of the polygon $\mathcal{P}$. Consequently, $A$ is the leftmost vertex of $\mathcal{P}$.
Next, recall that all vertices of $\mathcal{P}$ except $A$ and $B$ lie below the line $l$. 
Let $M$ be the intersection of $l$ with the $x$-axis, and let $N$ be the foot of the perpendicular from $B$ onto the $x$-axis; see Figure \ref{fig:conv4} again. 
By another claim from Lemma \ref{lm:hyparea}, the coordinates of $M$ are $(x_1+x_2,0)$.
The diameter of triangle $\triangle BNM$ is
\[ |BM| < \mathop{\textup{dist}}\big( (x_2,y_2), (x_1+x_2,0) \big) \leq x_1 + \frac{1}{4} < \frac{7a}{8} + \frac{1}{4} \leq a, \]
which leaves no room inside $\triangle BNM$ for another side of $\mathcal{P}$. Thus, $B$ is the rightmost vertex of the polygon $\mathcal{P}$.

We can now denote the vertices of $\mathcal{P}$ counterclockwise as
\[ A= C_1,\, C_2,\, \ldots,\, C_{n-1},\, C_n=B, \]
knowing that $n\geq 3$; see Figure \ref{fig:conv5}. Let $C'_i$ be the projection of $C_i$ onto the $x$-axis for $i=1,\ldots,n$.
For every $1\leq i\leq n-1$ we know that the projection of $\overline{C_i C_{i+1}}$ onto the $y$-axis has length at most $1/4$, so the Pythagorean theorem and $a\geq 2$ give
\[ a^2 = |C_i C_{i+1}|^2 \leq |C'_i C'_{i+1}|^2 + \Big(\frac{1}{4}\Big)^2 \leq |C'_i C'_{i+1}|^2 + \frac{a^2}{64}, \]
which implies $|C'_i C'_{i+1}|\geq 7a/8$.
Ultimately,
\[ a = |AB| \geq |C'_1 C'_n| = \sum_{i=1}^{n-1} |C'_i C'_{i+1}| \geq \frac{7(n-1)a}{8} >a, \]
which leads to the ultimate contradiction and completes the proof of Theorem \ref{thm:congruent}.

\begin{figure}
\includegraphics[width=0.77\linewidth]{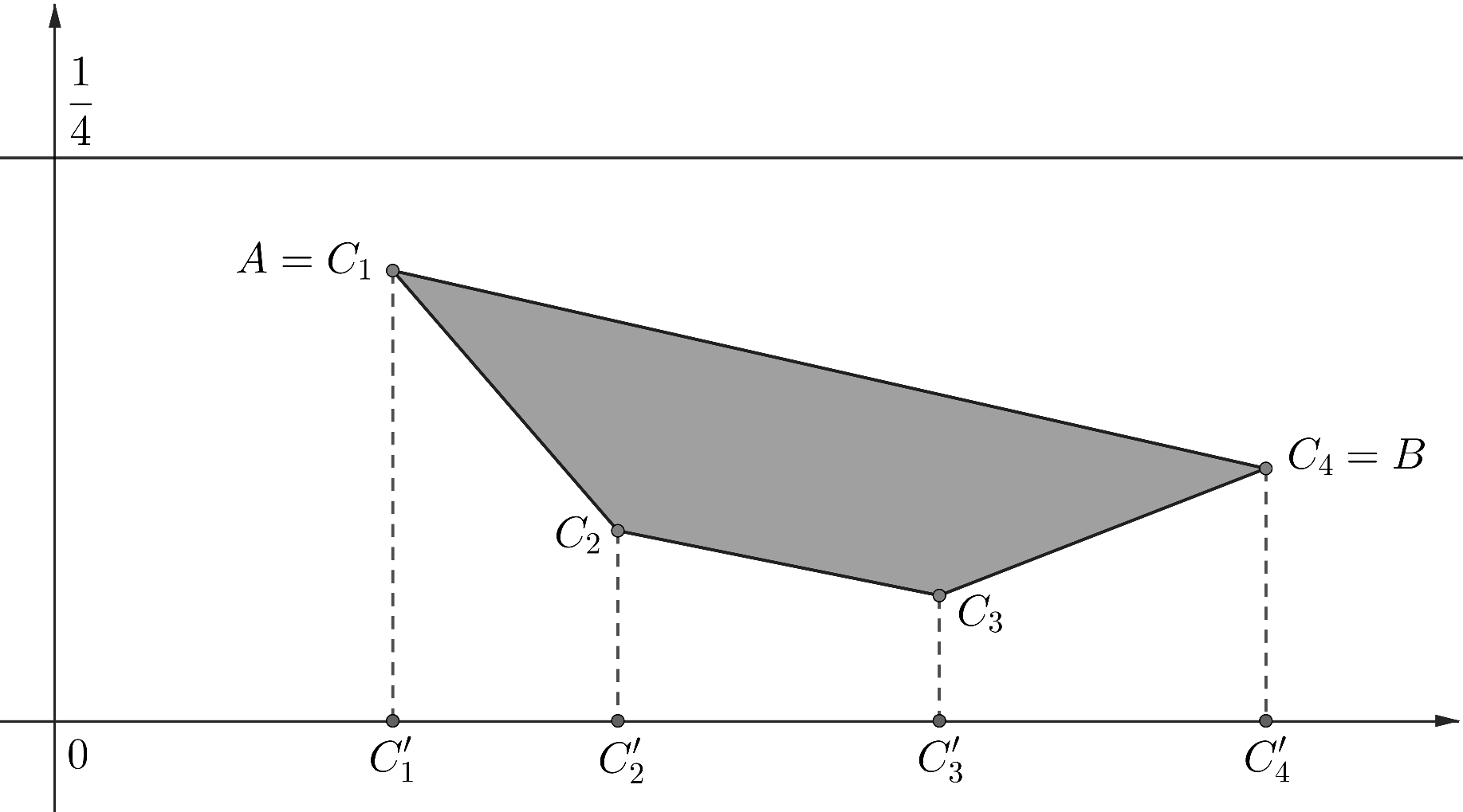}
\caption{}
\label{fig:conv5}
\end{figure}


\section*{Acknowledgment}
This work was supported in part by the Croatian Science Foundation under the project HRZZ-IP-2022-10-5116 (FANAP).



\bibliography{PolygonsofUnitArea}{}

@article {Erd78,
    AUTHOR = {Erd\H{o}s, Paul},
     TITLE = {Set-theoretic, measure-theoretic, combinatorial, and number-theoretic problems concerning point sets in {E}uclidean space},
   JOURNAL = {Real Anal. Exchange},
  FJOURNAL = {Real Analysis Exchange},
    VOLUME = {4},
      YEAR = {1978/79},
    NUMBER = {2},
     PAGES = {113--138},
      ISSN = {0147-1937},
   MRCLASS = {04A20 (05C55 10A20 52A37)},
  MRNUMBER = {533932}
}

@incollection {Erd83:open,
    AUTHOR = {Erd\H{o}s, Paul},
     TITLE = {Some combinatorial, geometric and set theoretic problems in measure theory},
 BOOKTITLE = {Measure {T}heory, {O}berwolfach 1983: {P}roceedings of the {C}onference held at {O}berwolfach, {J}une 26--{J}uly 2, 1983},
    SERIES = {Lecture Notes in Mathematics},
    VOLUME = {1089},
     PAGES = {321--327},
 PUBLISHER = {Springer, Berlin, Heidelberg},
      YEAR = {1984},
    EDITOR = {K\"{o}lzow, D. and Maharam-Stone, D.},
       DOI = {10.1007/BFb0072626}
}

@book {CFG91,
    AUTHOR = {Croft, Hallard T. and Falconer, Kenneth J. and Guy, Richard K.},
     TITLE = {Unsolved problems in geometry},
    SERIES = {Problem Books in Mathematics},
      NOTE = {Unsolved Problems in Intuitive Mathematics, II},
 PUBLISHER = {Springer-Verlag, New York},
      YEAR = {1991},
     PAGES = {xvi+198},
      ISBN = {0-387-97506-3},
   MRCLASS = {52-02},
  MRNUMBER = {1107516},
       DOI = {10.1007/978-1-4612-0963-8}
}

@misc {EP,
    author = {Bloom, Thomas F.},
    title = {{Erd\H{o}s} {P}roblem \#353},
    howpublished = {\url{https://www.erdosproblems.com/353}},
    note = {Accessed: December 16, 2024}
}

@unpublished {Kov23color,
	Author = {Kova\v{c}, Vjekoslav},
	Title = {Coloring and density theorems for configurations of a given volume (preprint, 47 pp.)},
	Note = {Available at: arXiv:2309.09973},
    Year = {2024}
}

@article {Ste20,
    AUTHOR = {Steinhaus, Hugo},
     TITLE = {Sur les distances des points dans les ensembles de mesure positive},
   JOURNAL = {Fund. Math.},
  FJOURNAL = {Fundamenta Mathematicae},
    VOLUME = {1},
      YEAR = {1920},
     PAGES = {93--104},
       DOI = {10.4064/fm-1-1-93-104}
}

@article {Gra80,
    AUTHOR = {Graham, Ronald Lewis},
     TITLE = {On partitions of {${\mathbb{E}}\sp{n}$}},
   JOURNAL = {J. Combin. Theory Ser. A},
  FJOURNAL = {Journal of Combinatorial Theory. Series A},
    VOLUME = {28},
      YEAR = {1980},
    NUMBER = {1},
     PAGES = {89--97},
      ISSN = {0097-3165,1096-0899},
   MRCLASS = {05A17 (05C55)},
  MRNUMBER = {558877},
MRREVIEWER = {Karsten\ Steffens},
       DOI = {10.1016/0097-3165(80)90061-8},
       URL = {https://doi.org/10.1016/0097-3165(80)90061-8},
}

@article {DJ10,
    AUTHOR = {Dumitrescu, Adrian and Jiang, Minghui},
     TITLE = {Monochromatic simplices of any volume},
   JOURNAL = {Discrete Math.},
  FJOURNAL = {Discrete Mathematics},
    VOLUME = {310},
      YEAR = {2010},
    NUMBER = {4},
     PAGES = {956--960},
      ISSN = {0012-365X,1872-681X},
   MRCLASS = {05D10 (52A38)},
  MRNUMBER = {2574848},
       DOI = {10.1016/j.disc.2009.09.026},
       URL = {https://doi.org/10.1016/j.disc.2009.09.026},
}

@article {AC14,
    AUTHOR = {Adhikari, Sukumar Das and Chen, Yong-Gao},
     TITLE = {On monochromatic configurations for finite colorings},
   JOURNAL = {Discrete Math.},
  FJOURNAL = {Discrete Mathematics},
    VOLUME = {333},
      YEAR = {2014},
     PAGES = {106--109},
      ISSN = {0012-365X,1872-681X},
   MRCLASS = {05C15},
  MRNUMBER = {3233412},
       DOI = {10.1016/j.disc.2014.06.015},
       URL = {https://doi.org/10.1016/j.disc.2014.06.015},
}

@article {AR12,
    AUTHOR = {Adhikari, Sukumar Das and Rath, Purusottam},
     TITLE = {Remarks on monochromatic configurations for finite colorings of the plane},
   JOURNAL = {Note Mat.},
  FJOURNAL = {Note di Matematica},
    VOLUME = {32},
      YEAR = {2012},
    NUMBER = {2},
     PAGES = {83--88},
      ISSN = {1123-2536,1590-0932},
   MRCLASS = {05D10},
  MRNUMBER = {3071796},
       DOI = {10.1109/mcs.2012.2185888},
       URL = {https://doi.org/10.1109/mcs.2012.2185888},
}

@book {Fol99,
    AUTHOR = {Folland, Gerald B.},
     TITLE = {Real analysis},
    SERIES = {Pure and Applied Mathematics (New York)},
   EDITION = {Second},
      NOTE = {Modern techniques and their applications, A Wiley-Interscience Publication},
 PUBLISHER = {John Wiley \& Sons, Inc., New York},
      YEAR = {1999},
     PAGES = {xvi+386},
      ISBN = {0-471-31716-0},
   MRCLASS = {00A05 (26-01 28-01 46-01)},
  MRNUMBER = {1681462},
}

@book {SW71,
    AUTHOR = {Stein, Elias M. and Weiss, Guido},
     TITLE = {Introduction to {F}ourier analysis on {E}uclidean spaces},
    SERIES = {Princeton Mathematical Series},
    VOLUME = {No. 32},
 PUBLISHER = {Princeton University Press, Princeton, NJ},
      YEAR = {1971},
     PAGES = {x+297},
   MRCLASS = {42A92 (31B99 32A99 46F99 47G05)},
  MRNUMBER = {304972},
MRREVIEWER = {Edwin\ Hewitt},
}

@article {Koi25,
    AUTHOR = {Koizumi, Junnosuke},
     TITLE = {Isosceles trapezoids of unit area with vertices in sets of
              infinite planar measure},
   JOURNAL = {Proc. Amer. Math. Soc.},
  FJOURNAL = {Proceedings of the American Mathematical Society},
    VOLUME = {153},
      YEAR = {2025},
    NUMBER = {11},
     PAGES = {4753--4758},
      ISSN = {0002-9939,1088-6826},
   MRCLASS = {28A75 (52C10)},
  MRNUMBER = {4971569},
       DOI = {10.1090/proc/17322},
       URL = {https://doi.org/10.1090/proc/17322},
}
\bibliographystyle{plainurl}

\end{document}